\newlength{\defbaselineskip}
\newcommand{\setlinespacing}[1]%
           {\setlength{\baselineskip}{#1 \defbaselineskip}}
\numberwithin{equation}{section}
\newtheorem{thm}{Theorem}[section]
\newtheorem{lem}[thm]{Lemma}
\newtheorem{prop}[thm]{Proposition}
\theoremstyle{definition}
\theoremstyle{remark}
\newtheorem{rem}[thm]{Remark}
\numberwithin{equation}{section}
\theoremstyle{theorem}
\begin{document}
\title[The radius of spatial analyticity]
{On the radius of spatial analyticity for the Klein-Gordon-Schr\"{o}dinger system}

\author{Jaeseop Ahn, Jimyeong Kim and Ihyeok Seo}

\thanks{This research was supported by NRF-2022R1A2C1011312.}

\subjclass[2010]{Primary: 35A20, 35Q40; Secondary: 42B35}
\keywords{Spatial analyticity, Klein-Gordon-Schr\"{o}dinger system, Gevrey-Bourgain spaces}

\address{Department of Mathematics, Sungkyunkwan University, Suwon 16419, Republic of Korea}
\email{j.ahn@skku.edu}

\email{jimkim@skku.edu}

\email{ihseo@skku.edu}

\begin{abstract}
In this paper, we study the persistence of spatial analyticity for the solutions to the Klein-Gordon-Schr\"{o}dinger system, which describes a physical system of a nucleon field interacting with a neutral meson field, with analytic initial data. 
Unlike the case of a single nonlinear dispersive equation,
not much is known about nonlinear dispersive systems as it is harder to show the spatial analyticity of coupled equations simultaneously. 
The only results known so far are rather recent ones for the Dirac-Klein-Gordon system
which governs the physical system when the nucleon  is described by Dirac spinor fields in the case of relativistic fields. In contrast, we aim here to study the Klein-Gordon-Schr\"{o}dinger system that works in the non-relativistic regime.
It is shown that the radius of spatial analyticity of the solutions at later times obeys an algebraic lower bound as time goes to infinity.
\end{abstract}

\maketitle

\section{Introduction}\label{sec1}
In this paper we consider the Cauchy problem for the Klein-Gordon-Schr\"{o}dinger system
\begin{equation}\label{KGS}
\begin{cases}
i\partial_t u + \Delta u = - un, \quad\quad u(0,x) = u_0(x),\\
\partial^2_t n + (1-\Delta)n = |u|^2, \quad\quad (n, \partial_t n)(0,x) = (n_0,n_1)(x),
\end{cases}
\end{equation}
where $u:\mathbb{R}^{1+d}\rightarrow \mathbb{C}$ and $n:\mathbb{R}^{1+d}\rightarrow\mathbb{R}$ for $d=1,2,3$.
This system is a classical model which describes a system of a complex scalar nucleon field $u$
interacting with a neutral real scalar meson field $n$.
The mass of the meson is normalized to be $1$.

The well-posedness of this Cauchy problem with initial data in Sobolev spaces $H^s(\mathbb{R}^d)$ has been intensively studied.
The best known result is that \eqref{KGS} is globally well-posed for $(u_0,n_0,n_1)\in L^2(\mathbb{R}^d)\times H^{s}(\mathbb{R}^d)\times H^{s-1}(\mathbb{R}^d)$ where $-1/2<s<1/2$ for $d=1$, $-1/2<s<3/2$ for $d=2$ and $-1/2<s\leq1$ for $d=3$; see \cite{H3,HGHL}.
For earlier studies, we refer the reader to \cite{H, CHT, H2} and references therein.

The main purpose of the present paper is to study  spatial analyticity of the system.
We focus on the situation
where we consider a real-analytic initial data with uniform radius of analyticity $\sigma_0$,
so there is a holomorphic extension to a complex strip $S_{\sigma_0}=\{x+iy:x,y\in\mathbb{R}^d,|y_1|,|y_2|,\cdots,|y_d|<\sigma_0\}$.
Now it is natural to ask whether the initial analyticity may be continued to a solution at later time t, but with a possibly smaller and shrinking radius of analyticity $\sigma(t)$;
we would like to estimate a decay rate of $\sigma(t)$ as time $t$ tends to infinity.

This type of question was first introduced by Kato and Masuda \cite{KM} in 1986 and there are plenty of works for a single nonlinear dispersive equation such as the Kadomtsev-Petviashvili equation \cite{B}, KdV type equations \cite{BG,BGK,SS,Te2,HW,PS,AKS2}, Schr\"{o}dinger equations \cite{BGK2, Te, AKS}, and Klein-Gordon equations \cite{P}.

On the other hand, not much is known about nonlinear dispersive systems as it is harder to show the spatial analyticity of coupled equations, controlling all of them at the same time.
The only results known so far are rather recent ones for the Dirac-Klein-Gordon system \cite{ST,S}
which governs the physical system when the nucleon $u$ is described by Dirac
spinor fields in the case of relativistic fields.
In contrast, we aim here to study the Klein-Gordon-Schr\"{o}dinger system that works in the non-relativistic regime.

A class of analytic functions suitable for the problem we address here is the Gevrey class $G^{\sigma,s}(\mathbb{R}^d)$
introduced by Foias and Temam \cite{FT}, which may be defined with the norm
$$\| f\|_{G^{\sigma,s}} = \big\| e^{\sigma\| D\|}\langle D\rangle^s f \big\|_{L^2}$$
for $\sigma\geq0$ and $s\in\mathbb{R}$.
Here, $D=-i\nabla$ with Fourier symbol $\xi$, $\|\xi\|=\sum^d_{i=1}|\xi_i|$ and $\langle \xi \rangle = \sqrt{1+|\xi|^2}$.
According to the Paley-Wiener theorem\footnote{The proof given for $s=0$ in \cite{K} applies also for $s\in\mathbb{R}$
with some obvious modifications.} (see e.g. \cite{K}, p. 209),
a function $f$ belongs to $G^{\sigma,s}$ with $\sigma>0$
if and only if it is the restriction to the real line of a function $F$ which is holomorphic in the strip
$$S_{\sigma}=\{x+iy:x,y\in\mathbb{R}^d,|y_1|,|y_2|,\cdots,|y_d|<\sigma\}$$  and satisfies
$\sup_{|y| < \sigma} \| F(x+iy)\|_{H_x^s} <\infty$.
Therefore, every function in $G^{\sigma,s}$ with $\sigma>0$ has an analytic extension to the strip $S_\sigma$.

In view of this property of the Gevrey class, 
we take initial data in $G^{\sigma,s}$ for some initial radius $\sigma>0$
and then try to extend it globally in time with estimating the behavior of the radius of analyticity $\sigma(t)$ as time $t$ goes.
Our result is the following theorem.

\begin{thm}\label{maintheorem}
Let $d=1,2,3$. Let $(u,n)$ be the global $C^\infty$ solution of the Cauchy problem \eqref{KGS}
with the initial data $(u_0, n_0, n_1)\in G^{\sigma_0,r}(\mathbb{R}^d)\times G^{\sigma_0,s}(\mathbb{R}^d)\times G^{\sigma_0,s-1}(\mathbb{R}^d)$
for some $\sigma_0>0$ and $r,s\in\mathbb{R}$. Then for all $t\in\mathbb{R}$
$$
(u, n, \partial_t n)(t)\in G^{\sigma(t),r}(\mathbb{R}^d) \times G^{\sigma(t),s}(\mathbb{R}^d) \times G^{\sigma(t),s-1}(\mathbb{R}^d),
$$
where the radius of analyticity $\sigma(t)$ satisfies an asymptotic lower bound\,\footnote{We use the notation $a^\pm=a\pm\varepsilon$ for arbitrarily small $\varepsilon>0$.}
$$
\sigma(t)\geq ct^{-p^+}\quad \textrm{as}\quad |t|\rightarrow\infty,
$$
with $p=\max\{8/(4-d),4\}$ and a constant $c>0$ depending on $\sigma_0,r,s$ and the norm of the initial data.
\end{thm}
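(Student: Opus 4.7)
The plan is to combine a local well-posedness theory in Gevrey-modified Bourgain spaces with an approximate conservation law for the Gevrey analogue of the physical energy, and then iterate on successive time intervals while carefully tracking how much the radius of analyticity $\sigma$ must shrink so that the iteration closes.

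\textit{Step 1 (local theory).} I would first set up Bourgain-type spaces $X^{\sigma,s,b}_{\mathrm{S}}$ and $X^{\sigma,s,b}_{\pm,\mathrm{KG}}$ adapted to the Schrödinger and Klein-Gordon symbols and equipped with the analytic weight $e^{\sigma\|D\|}$. The pointwise inequality $e^{\sigma\|\xi_1+\xi_2\|}\le e^{\sigma\|\xi_1\|}e^{\sigma\|\xi_2\|}$ shows that the bilinear and trilinear Bourgain estimates used in the Sobolev global theory \cite{HGHL,H3} for \eqref{KGS} transfer to these Gevrey-Bourgain spaces without loss. A standard contraction-mapping argument then provides a local solution on $[0,T_0]$ with $T_0$ depending only on the $G^{\sigma,\cdot}$ norms of the data.

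\textit{Step 2 (almost conservation).} The system conserves the charge $\|u\|_{L^2}^2$ together with an energy quadratic in $(u,n,\partial_t n)$. Writing $U=e^{\sigma\|D\|}u$, $N=e^{\sigma\|D\|}n$ and $V=e^{\sigma\|D\|}\langle D\rangle^{-1}\partial_t n$ (the last two real since $n,\partial_t n$ are real and the symbol $e^{\sigma\|\xi\|}$ is even), I would study the Gevrey analogue
$$\mathcal{E}_\sigma(t)=\|U(t)\|_{L^2}^2+\tfrac12\|N(t)\|_{L^2}^2+\tfrac12\|V(t)\|_{L^2}^2.$$
On differentiating in time, the only nontrivial contribution comes from the commutator $e^{\sigma\|D\|}(un)-UN$, whose Fourier kernel is $e^{\sigma\|\xi\|}-e^{\sigma(\|\xi_1\|+\|\xi_2\|)}$ with $\xi=\xi_1+\xi_2$. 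Since $0\le\|\xi_1\|+\|\xi_2\|-\|\xi\|\le 2\min(\|\xi_1\|,\|\xi_2\|)$, interpolation yields
$$\bigl|e^{\sigma\|\xi\|}-e^{\sigma(\|\xi_1\|+\|\xi_2\|)}\bigr|\le C\,\sigma^{\theta}\min(\|\xi_1\|,\|\xi_2\|)^{\theta}\,e^{\sigma(\|\xi_1\|+\|\xi_2\|)}$$
for any $\theta\in[0,1]$, extracting the small factor $\sigma^\theta$ at the price of $\theta$ extra derivatives on the lower-frequency factor, which are absorbed by the trilinear Bourgain estimates of Step 1. One thereby obtains $\sup_{[0,T_0]}\mathcal{E}_\sigma\le\mathcal{E}_\sigma(0)+C\sigma^\theta$ for the largest $\theta$ allowed by the multilinear estimates.

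\textit{Step 3 (iteration and main obstacle).} Iterating Steps 1 and 2 on successive intervals of length $T_0$, after $N\sim T/T_0$ steps the energy has grown by at most $NC\sigma^\theta$; requiring this to remain bounded so that the local theory restarts with a uniform $T_0$ forces $\sigma\gtrsim T^{-1/\theta}$. The exponent $p$ in the statement is the reciprocal of the largest admissible $\theta$, and tracking this optimum in each dimension should yield $p=\max\{8/(4-d),4\}$, the two terms reflecting different binding constraints between the Schrödinger and Klein-Gordon interactions as $d$ varies. Persistence of the full regularities $G^{\sigma,r}$, $G^{\sigma,s}$ and $G^{\sigma,s-1}$, rather than just the $G^{\sigma,0}$ energy level, then follows from the local theory together with a standard higher-regularity argument. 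The decisive difficulty is the almost conservation step itself: unlike the one-equation cases \cite{SS,Te,AKS}, here the Schrödinger component (whose Bourgain space is adapted to the paraboloid $\tau=|\xi|^2$) and the two Klein-Gordon components (adapted to the hyperboloids $\tau=\pm\langle\xi\rangle$) must be controlled simultaneously, and the mixed nonlinearities $un,|u|^2$ require a sharp resonance analysis between these two distinct geometries to realize the optimal $\theta$.
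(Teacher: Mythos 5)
There is a genuine gap. Your Step 2 rests on a conserved quantity that the Klein--Gordon--Schr\"odinger system does not have. The quantity
$\mathcal{E}_\sigma(t)=\|U\|_{L^2}^2+\tfrac12\|N\|_{L^2}^2+\tfrac12\|V\|_{L^2}^2$
is \emph{not} conserved even at $\sigma=0$: a direct computation using $\partial_t^2 n=-\langle D\rangle^2 n+|u|^2$ gives
$\tfrac{d}{dt}\mathcal{E}_0(t)=\int \langle D\rangle^{-2}\partial_t n\cdot |u|^2\,dx\neq 0$,
and the true Hamiltonian of \eqref{KGS} contains the cubic term $-\int n|u|^2\,dx$ and lives at the $H^1\times H^1\times L^2$ level; no quadratic invariant involving $(n,\partial_t n)$ exists. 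So there is nothing to make ``almost conserved'' for the Klein--Gordon component. The paper's strategy is asymmetric precisely because of this: only the charge $\|u\|^2_{G^{\sigma,0}}$ satisfies an almost conservation law (the commutator argument with $F(v,m)$ in Lemma \ref{F_lemma} yields a single power $\sigma^1$, corresponding to $\theta=1$, not an interpolation range), while the quantity $\mathfrak{N}_\sigma(t)=\|n_+\|_{G^{\sigma,1}}+\|n_-\|_{G^{\sigma,1}}$ is \emph{allowed to grow}, its increment per time slab bounded directly from the Duhamel formula by $C\delta^{(1/q)^-}\mathfrak{M}_\sigma(0)$. That growth, up to size $\sim T^{q}$, together with the resulting shrinkage of the local existence time $\delta\sim T^{-q}$ and the $\sim T^{1+q}$ iterations, is what produces $\sigma\sim T^{-2q}$ and hence $p=2q=\max\{8/(4-d),4\}$. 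Your heuristic $\sigma\gtrsim T^{-1/\theta}$ with $\theta=1$ would give $p=1$, which is far from the theorem; it misses both the growth of the Klein--Gordon norm and the dependence of the slab length on that growing norm.

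Step 1 has a second, smaller but concrete problem: you cannot import the bilinear estimates ``without loss'' from the Sobolev well-posedness theory for $d=2,3$. The estimates in \cite{H,H2,H3,C} have essentially zero uniform gap $b'-b$ for $d\ge 2$, and the algebraic decay of $\sigma(t)$ hinges entirely on having a uniform positive gap (the exponent $q$ is $1/(b'-b)$ up to $\varepsilon$). The paper devotes Section \ref{sec6} to proving new bilinear estimates, Lemma \ref{estimate}, that give $b'-b<\min\{(4-d)/4,1/2\}$ uniformly; this is where $d$ enters the exponent. The subadditivity $e^{\sigma\|\xi_1+\xi_2\|}\le e^{\sigma\|\xi_1\|}e^{\sigma\|\xi_2\|}$ does reduce the Gevrey case to $\sigma=0$, as you say, but the remaining $\sigma=0$ estimate with the required gap is the hard part, not a citation.

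In short: the overall architecture (local Gevrey--Bourgain theory, commutator estimate, iteration) is the right one and coincides with the paper's, but the core mechanism you propose for the almost conservation law does not exist for this system, and the derivation of $p$ from $\theta$ does not account for the unbounded growth of the wave component or the shrinking local time, so the proof would not close and would not yield the claimed rate.
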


Only when $d=1,2,3$ does the existing well-posedness theory in $H^s$ guarantee the existence of the global $C^\infty$ solution
in the theorem, given initial data $(u_0, n_0, n_1)\in G^{\sigma_0,r}(\mathbb{R}^d)\times G^{\sigma_0,s}(\mathbb{R}^d)\times G^{\sigma_0,s-1}(\mathbb{R}^d)$
for any $\sigma_0>0$ and $r,s\in\mathbb{R}$.
Indeed, observe first that $G^{0,s}$ coincides with the Sobolev space $H^s$ and the embeddings
\begin{equation}\label{emb}
G^{\sigma,s}\subset G^{\sigma',s'}
\end{equation}
hold for all $0\leq \sigma' < \sigma$ and $s,s'\in\mathbb{R}$.
As a consequence of this embedding with $\sigma'=0$ and the existing well-posedness theory in $H^{s'}$,
the Cauchy problem \eqref{KGS} has a unique global smooth solution.

The outline of this paper is as follows. In Section \ref{sec2} we transform the system \eqref{KGS}
into an equivalent system of first order in time, and restate Theorem \ref{maintheorem} accordingly
(see Theorem \ref{maintheorem2}).
In Section \ref{sec3} we introduce some analytic function spaces and their basic properties
to be used in later sections.
In Section \ref{sec4} we present a couple of norm estimates employed in the proof of Theorem \ref{maintheorem2}. In Section \ref{sec5}, we first obtain a local-wellposedness in the Gevrey space applying Picard's iteration, and establish an almost conservation law to bound the growth of the Gevrey norm of the solution in time. Then we finish the proof by extending the local solution globally in time by making use of the approximate conservation law. The last section, Section \ref{sec6}, is devoted to proving the norm estimates given in Section \ref{sec4}.

Throughout this paper, we denote $A\lesssim B$ to mean $A\leq CB$ for some positive constant $C$, and $A\sim B$ to mean $A\lesssim B\lesssim A$.

\section{Reformulation of the system}\label{sec2}
We shall transform the system \eqref{KGS} into an equivalent system of first order in $t$ 
by observing 
$$\partial_t^2+1-\Delta=\langle D \rangle^2(1+i\langle D \rangle^{-1}\partial_t)(1-i\langle D \rangle^{-1}\partial_t).$$
We first let
$$n_{\pm} = n \pm i\langle D \rangle^{-1} \partial_t n.$$
Then we have
$$
n=\frac{1}{2}(n_+ + n_-),
$$
and the system becomes equivalent to 
\begin{equation}\label{KGS2}
\begin{cases}
i\partial_t u + \Delta u = -u (n_+ + n_-)/2, \quad\quad u(0) = u_0, \\
i\partial_t n_+ - \langle D \rangle n_+  = -\langle D \rangle^{-1} |u|^2, \quad\quad n_+(0) = \phi_+,\\
i\partial_t n_- + \langle D \rangle n_-  = \langle D \rangle^{-1} |u|^2, \quad\quad n_-(0) = \phi_-.
\end{cases}
\end{equation}
Notice that if
$$
(u,n,\partial_tn)\in C([-T,T]; G^{\sigma,r}(\mathbb{R}^d) \times G^{\sigma,s}(\mathbb{R}^d) \times G^{\sigma,s-1}(\mathbb{R}^d))
$$
is a solution of \eqref{KGS} with initial data $(u_0,n_0,n_1)\in G^{\sigma,r}\times G^{\sigma,s} \times G^{\sigma,s-1},$ then
$$
(u,n_+,n_-)\in C([-T,T]; G^{\sigma,r}(\mathbb{R}^d) \times G^{\sigma,s}(\mathbb{R}^d) \times G^{\sigma,s}(\mathbb{R}^d)
$$
is that of \eqref{KGS2} with initial data $(u_0,\phi_+,\phi_-)\in G^{\sigma,r}\times G^{\sigma,s} \times G^{\sigma,s},$ and vice versa.

With this observation we can restate Theorem \ref{maintheorem} as follows, and will prove the restatement in the remainder of the paper.

\begin{thm}[Theorem \ref{maintheorem}]\label{maintheorem2}
Let $d=1,2,3$.
Let $(u, n_+, n_-)$ be the global $C^\infty$ solution of the Cauchy problem \eqref{KGS2}
with the initial data
$(u_0, \phi_+, \phi_-)\in G^{\sigma_0,r}(\mathbb{R}^d)\times G^{\sigma_0,s}(\mathbb{R}^d)\times
 G^{\sigma_0,s}(\mathbb{R}^d)$
for some $\sigma_0>0$ and $r,s\in\mathbb{R}$. Then for all $t\in\mathbb{R}$
$$
(u, n_+, n_-)(t)\in G^{\sigma(t),r}(\mathbb{R}^d) \times G^{\sigma(t),s}(\mathbb{R}^d) \times G^{\sigma(t),s}(\mathbb{R}^d),
$$
where the radius of analyticity $\sigma(t)$ satisfies an asymptotic lower bound
$$
\sigma(t)\geq ct^{-p^+}\quad \textrm{as}\quad |t|\rightarrow\infty,
$$
with $p=\max\{8/(4-d),4\}$ and a constant $c>0$ depending on $\sigma_0,r,s$ and the norm of the initial data.
\end{thm}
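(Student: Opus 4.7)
The plan is to follow the Bourgain--Selberg framework for propagating analyticity globally in time by combining a Picard iteration in Gevrey--Bourgain spaces with an approximate conservation law. One introduces the spaces $X^{\sigma,s,b}_{\mathrm{Sch}}$ and $X^{\sigma,s,b}_\pm$ adapted to the Schr\"odinger and Klein--Gordon half-wave symbols via $\|u\|_{X^{\sigma,s,b}}=\|e^{\sigma\|D\|}u\|_{X^{s,b}}$. Since $\|\xi\|=\sum_i|\xi_i|$ is the $\ell^1$ norm, the weight $e^{\sigma\|D\|}$ is submultiplicative under convolution, so the standard trilinear/bilinear estimates controlling $u(n_++n_-)$ and $\langle D\rangle^{-1}|u|^2$ in $H^r\times H^s\times H^s$ lift verbatim to their Gevrey analogues. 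These are precisely the estimates I expect to be collected in Section~\ref{sec4}, and a contraction argument based on them produces a local solution to \eqref{KGS2} on $[-T_0,T_0]$ with
$$
T_0\sim\bigl(1+\|u_0\|_{G^{\sigma,r}}+\|\phi_+\|_{G^{\sigma,s}}+\|\phi_-\|_{G^{\sigma,s}}\bigr)^{-\beta}
$$
for some $\beta>0$ that is \emph{independent} of $\sigma$. This uniformity is what will make the iteration step essentially fixed while $\sigma$ shrinks.

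The heart of the argument is an almost conservation law. Setting $U=e^{\sigma\|D\|}u$, $N_\pm=e^{\sigma\|D\|}n_\pm$ and
$$
F(t)=\|U(t)\|_{L^2}^{2}+\tfrac{1}{2}\bigl(\|N_+(t)\|_{L^2}^{2}+\|N_-(t)\|_{L^2}^{2}\bigr),
$$
differentiation in time followed by substitution from \eqref{KGS2} makes all terms cancel that would vanish at $\sigma=0$, leaving only commutator expressions governed by the pointwise inequality
$$
0\le e^{\sigma\|\xi\|}e^{\sigma\|\eta\|}-e^{\sigma\|\xi+\eta\|}\lesssim\sigma^{\theta}\bigl(\min(\|\xi\|,\|\eta\|)\bigr)^{\theta}e^{\sigma(\|\xi\|+\|\eta\|)},\qquad 0<\theta\le 1.
$$
Reapplying the multilinear machinery from Step~1, with the extra $\min(\|\xi\|,\|\eta\|)^{\theta}$ factor absorbed into Strichartz and $X^{s,b}$ norms, one then obtains
$$
\sup_{|t|\le T_0}F(t)\le F(0)+C\sigma^{\theta},
$$
where $C$ depends only on the initial Gevrey norm.

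One now iterates. On each subinterval $[kT_0,(k+1)T_0]$ the quantity $F$ grows by at most $C\sigma^{\theta}$, so after $N\sim t/T_0$ steps $F(t)\le F(0)+NC\sigma^{\theta}$. As long as this stays dominated by, say, $2F(0)$, the local theory can be re-run on the next subinterval with the same $T_0$, closing an induction. The resulting constraint $N\sigma^{\theta}\lesssim 1$ yields $\sigma(t)^{\theta}\gtrsim 1/t$, up to an arbitrarily small loss from the $X^{s,b}$ argument, hence $\sigma(t)\gtrsim t^{-(1/\theta)^+}$. The largest admissible $\theta$ is fixed by the weakest of the two multilinear gains: the Schr\"odinger--Klein-Gordon interaction $u\cdot n_\pm$ admits a commutator gain of order $(4-d)/8$ through the relevant Strichartz pair in dimension $d$, while the Klein-Gordon self-interaction $\langle D\rangle^{-1}|u|^2$ admits only a gain of order $1/4$; taking the worse of the two reciprocals reproduces $p=\max\{8/(4-d),4\}$.

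The principal obstacle, as in related spatial-analyticity problems, is to execute Step~2 with a gain $\sigma^{\theta}$ as large as possible while keeping the multilinear estimates in $X^{\sigma,s,b}$ closed. Since $u$ and $n_\pm$ live on different dispersive curves and their admissible Strichartz regimes depend sensitively on $d$, the two contributions to $F$ force different values of $\theta$, and it is exactly this competition that determines the two terms appearing in $p=\max\{8/(4-d),4\}$.
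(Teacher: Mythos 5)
Your high-level plan is the right one and matches the paper's framework (Gevrey--Bourgain spaces, Picard iteration with $\sigma$-uniform lifespan, an almost-conservation step, global iteration), but the central mechanism you propose for the almost-conservation step has a genuine gap, and your accounting for the exponent $p$ is not the mechanism that actually operates here.

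First, the combined functional $F(t)=\|U(t)\|_{L^2}^2+\tfrac12(\|N_+(t)\|_{L^2}^2+\|N_-(t)\|_{L^2}^2)$ is \emph{not} conserved at $\sigma=0$. A direct computation from \eqref{KGS2} gives $\frac{d}{dt}(\|n_+\|_{L^2}^2+\|n_-\|_{L^2}^2)=4\,\mathrm{Im}\int \langle D\rangle^{-1}|u|^2\,\overline{n_\pm}\,dx$ type terms that do not vanish, so your claimed bound $\sup_{|t|\le T_0}F(t)\le F(0)+C\sigma^{\theta}$ cannot hold: the discrepancy does not tend to zero as $\sigma\to0$. The only exactly-conserved quantity exploited in the paper is the charge $\mathfrak{M}_0(t)=\|u(t)\|_{L^2}^2$; the meson quantity $\mathfrak{N}_\sigma(t)=\|n_+(t)\|_{G^{\sigma,1}}+\|n_-(t)\|_{G^{\sigma,1}}$ is \emph{not} conserved even at $\sigma=0$ and is instead bounded directly from Duhamel's formula with \emph{no} $\sigma$-prefactor, $\mathfrak{N}_\sigma(\delta)\le\mathfrak{N}_\sigma(0)+C\delta^{(1/q)^-}\mathfrak{M}_\sigma(0)$. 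This asymmetry between the two components is essential: $\mathfrak{N}_\sigma$ grows roughly linearly in the number of steps, and this growth feeds back into the $\mathfrak{M}_\sigma$ increment through the product nonlinearity. Collapsing everything into one quantity $F$ hides the structure that makes the global iteration close.

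Second, the exponent $p$ does not arise from a fractional commutator gain $\sigma^\theta$ with $d$-dependent $\theta$. In the paper the commutator factor $e^{\sigma(\|\xi\|-\|\xi-\xi_1\|)}-1$ is bounded linearly, giving a clean factor of $\sigma$ (Lemma~\ref{F_lemma}); the dimension enters through the admissible gap $b'-b<\min\{(4-d)/4,1/2\}$ in the bilinear $X^{s,b}$ estimates (Lemma~\ref{estimate}), which controls the time-localization gain $\delta^{(1/q)^-}$ with $q=\max\{4/(4-d),2\}$. One then sets $\delta\sim T^{-(q+\varepsilon_0)}$, so $n\sim T/\delta$ steps are needed, and balancing the accumulated increment $n\sigma\delta^{(1/q)^-}\mathfrak{M}_\sigma\mathfrak{N}_\sigma$ against $\mathfrak{M}_\sigma(0)$, with $\mathfrak{N}_\sigma$ itself of size $\sim T^{q+\varepsilon_0}$, forces $\sigma\sim T^{-2q-2\varepsilon_0}$, hence $p=2q$. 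The doubling $p=2q$ reflects exactly the linear-in-time growth of $\mathfrak{N}_\sigma$, which your energy-functional formulation misses. Your heuristic $N\sigma^{\theta}\lesssim 1\Rightarrow\sigma\gtrsim t^{-1/\theta}$ would give $\sigma\gtrsim t^{-1}$ for $\theta=1$ and does not reproduce the stated rate.

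The local well-posedness step of your outline is fine and is essentially the paper's Theorem~\ref{local_wellposedness2}, and the observation that Gevrey estimates follow from the $\sigma=0$ case by the substitution $f\mapsto e^{\sigma\|D\|}f$ (via $\|\xi\|\le\|\xi-\eta\|+\|\eta\|$) is exactly what the paper uses. To repair the proposal: replace $F$ by the pair $(\mathfrak{M}_\sigma,\mathfrak{N}_\sigma)$, prove almost conservation of $\mathfrak{M}_\sigma$ using the charge identity and a linear-in-$\sigma$ commutator bound, bound $\mathfrak{N}_\sigma$ directly by Duhamel without any $\sigma$-gain, and then run the bookkeeping carefully to see where the factor $2$ in $p=2q$ comes from.
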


\section{Preliminaries}\label{sec3}
In this section we introduce some function spaces and their basic properties which will be used later
for the proof of Theorem \ref{maintheorem2}.

For $s,b\in\mathbb{R}$ and some real valued polynomial $h$, we use  $X_h^{s,b}=X_h^{s,b}(\mathbb{R}^{1+d})$ to denote the Bourgain space defined by the norm
$$
\|f\|_{X^{s,b}_h} =\big\| \langle\xi\rangle^s\langle\tau-h(\xi)\rangle^b\widehat{f}(\tau,\xi)\big\|_{L^2_{\tau,\xi}},
$$
where $\widehat{f}$ denotes the space-time Fourier transform given by
$$
\widehat{f}(\tau,\xi)=\int_{\mathbb{R}^{1+d}} e^{-i(t\tau+x\cdot\xi)}f(t,x) \ dtdx.
$$
For simplicity, we omit $h$ in the notation $X^{s,b}_h$ when $h(\xi)=-|\xi|^2$, and replace $h$ with $\pm$ when $h(\xi)=\mp|\xi|$.
Since $\langle \tau\mp\langle \xi\rangle\rangle\sim\langle \tau\mp|\xi|\rangle$, 
we shall use here $\mp|\xi|$ just for technical reasons rather than $\mp\langle\xi \rangle$ for the Klein-Gordon evolution.
We denote by $X_h^{s,b}(\delta)$
the restriction of the Bourgain space to a time slab $(0,\delta )\times\mathbb{R}^d$
equipped with the norm
$$
\|f\|_{X^{s,b}_h(\delta)}=\inf\big\{\|g\|_{X^{s,b}_h} : g=f\,\, \text{on}\,\, (0,\delta)\times\mathbb{R}^d\big\}.
$$

We also introduce the Gevrey-Bourgain space $X_h^{\sigma,s,b}=X_h^{\sigma,s,b}(\mathbb{R}^{1+d})$ defined by the norm
$$
\|f\|_{X^{\sigma,s,b}_h}=\big\| e^{\sigma\| D\|}f\big\|_{X^{s,b}_h},
$$
which coincides with the Bourgain space $X^{s,b}_h$ particularly when $\sigma=0$.
Its restriction $X^{\sigma,s,b}_h(\delta)$ to a time slab $(0,\delta)\times\mathbb{R}^d$ is defined in a similar way as above.
The Gevrey-modification of the Bourgain spaces was used already by Bourgain \cite{B}
to study spatial analyticity for the Kadomtsev-Petviashvili equation.
He proved that the radius of analyticity remains positive as long as the solution exists.
His argument is quite general and applies to a class of dispersive equations,
but it does not give any lower bound on the radius $\sigma(t)$ as $|t|\rightarrow\infty$.

The $X_h^{\sigma,s,b}$-estimates in Lemmas \ref{lem1}, \ref{lem2} and \ref{lem3} follow easily by substitution $f\rightarrow e^{\sigma\|D\|}f$ using the corresponding properties of $X_h^{s,b}$-spaces and the restrictions thereof;
the proofs of the first two lemmas (when $\sigma=0$) can be found in Section 2.6 of \cite{T},
and see Lemma 7 of \cite{S} for the third lemma.

\begin{lem}\label{lem1}
Let $\sigma \geq 0$, $s\in \mathbb{R}$ and $b>1/2$. Then $X^{\sigma,s,b}_{h} \subset C(\mathbb{R},G^{\sigma,s})$ and
$$
\sup_{t\in\mathbb{R}} \| f(t)\|_{G^{\sigma,s}} \leq C\| f \|_{X^{\sigma,s,b}_{h}},
$$
where $C>0$ is a constant depending only on b.
\end{lem}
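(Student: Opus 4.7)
The plan is to reduce to the unweighted case $\sigma=0$ and then invoke the classical embedding for Bourgain spaces. Setting $g = e^{\sigma\|D\|}f$, we have $\|g\|_{X^{s,b}_h} = \|f\|_{X^{\sigma,s,b}_h}$ and $\|g(t)\|_{H^s} = \|f(t)\|_{G^{\sigma,s}}$ pointwise in $t$, so it suffices to prove $X^{s,b}_h \subset C(\mathbb{R}, H^s)$ together with the bound $\sup_{t}\|g(t)\|_{H^s} \leq C_b\|g\|_{X^{s,b}_h}$ for $b>1/2$.

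For the sup-norm bound, I would write $g$ via Fourier inversion,
\[
g(t,x) = \frac{1}{(2\pi)^{1+d}}\int e^{i(t\tau + x\cdot\xi)}\widehat{g}(\tau,\xi)\,d\tau\,d\xi,
\]
and apply Plancherel in $x$ to obtain
\[
\|g(t)\|_{H^s}^2 \;\lesssim\; \int \langle\xi\rangle^{2s}\Big|\int e^{it\tau}\widehat{g}(\tau,\xi)\,d\tau\Big|^2 d\xi.
\]
Estimating the inner integral by Cauchy-Schwarz against the weight $\langle\tau-h(\xi)\rangle^{-2b}$ produces a factor $\int\langle\tau-h(\xi)\rangle^{-2b}d\tau$, which is a finite constant $C_b$ precisely because $b>1/2$, together with a factor $\int\langle\tau-h(\xi)\rangle^{2b}|\widehat{g}(\tau,\xi)|^2 d\tau$. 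Integrating in $\xi$ recovers $\|g\|_{X^{s,b}_h}^2$, and since the resulting constant is independent of $t$, the estimate is uniform in $t$.

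For the continuity claim, I would note that for each fixed $\xi$ the scalar-valued map $t\mapsto \int e^{it\tau}\widehat{g}(\tau,\xi)\,d\tau$ is continuous by dominated convergence in $\tau$, with integrable dominant $|\widehat{g}(\tau,\xi)|$ (itself integrable in $\tau$ for a.e.\ $\xi$ by the same Cauchy-Schwarz argument). The continuity of $t\mapsto g(t)$ as an $H^s$-valued map then follows from a second application of dominated convergence in $\xi$, using the pointwise bound $\langle\xi\rangle^{2s}|\int e^{it\tau}\widehat{g}\,d\tau|^2 \leq C_b \langle\xi\rangle^{2s}\int\langle\tau-h(\xi)\rangle^{2b}|\widehat{g}|^2 d\tau$, which is integrable in $\xi$ by assumption.

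I do not anticipate any real obstacle: the entire argument rests on the integrability of $\langle\tau\rangle^{-2b}$, which is exactly where the threshold $b>1/2$ enters. The Gevrey weight $e^{\sigma\|D\|}$ is a purely spatial Fourier multiplier and is carried through transparently, since the definitions of $X^{\sigma,s,b}_h$ and $G^{\sigma,s}$ are set up to make the substitution $f\mapsto e^{\sigma\|D\|}f$ an isometry onto the corresponding classical spaces.
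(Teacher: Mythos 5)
Your proof is correct and takes essentially the same approach as the paper: reduce to $\sigma=0$ via the isometric substitution $g=e^{\sigma\|D\|}f$, then use the classical embedding $X^{s,b}_h\subset C(\mathbb{R},H^s)$ for $b>1/2$. The paper simply cites Section 2.6 of Tao's book for the classical case, whereas you supply the standard Fourier-inversion/Cauchy--Schwarz argument explicitly.
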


\begin{lem}\label{lem2}
Let $\sigma \geq 0$, $s\in\mathbb{R}$, $-1/2<b<b^\prime<1/2$ and $\delta>0$. Then
$$
\|f\|_{X^{\sigma,s,b}_h(\delta)} \leq C\delta^{b^\prime-b}\|f\|_{X^{\sigma,s,b^\prime}_h(\delta)},
$$
where the constant $C>0$ depends only on $b$ and $b'$.
\end{lem}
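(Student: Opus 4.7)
My first move is to reduce to the case $\sigma=0$. Since $e^{\sigma\|D\|}$ acts only in the spatial variable, it commutes with time-slab restriction and is an isometric bijection from $X^{\sigma,s,b}_h(\delta)$ onto $X^{s,b}_h(\delta)$ (and similarly with $b'$ in place of $b$). Applying this substitution to both sides, the claim reduces to the classical time-localization estimate for ordinary Bourgain spaces,
$$
\|g\|_{X^{s,b}_h(\delta)}\leq C\delta^{b'-b}\|g\|_{X^{s,b'}_h(\delta)},\qquad -\tfrac12<b<b'<\tfrac12.
$$

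Next I would run an extension-and-cutoff argument. Given $\varepsilon>0$, pick an extension $G\in X^{s,b'}_h$ with $G=g$ on $(0,\delta)\times\mathbb R^d$ and $\|G\|_{X^{s,b'}_h}\leq (1+\varepsilon)\|g\|_{X^{s,b'}_h(\delta)}$. Fix $\chi\in C_c^\infty(\mathbb R)$ with $\chi\equiv 1$ on $[0,1]$ and set $\psi_\delta(t):=\chi(t/\delta)$, so $\psi_\delta\equiv 1$ on $[0,\delta]$. Then $\psi_\delta G$ is another admissible extension of $g$, so the definition of the restricted norm gives $\|g\|_{X^{s,b}_h(\delta)}\leq\|\psi_\delta G\|_{X^{s,b}_h}$. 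Conjugating by the free propagator $U(t)=e^{-ith(D)}$ identifies the norm $\|G\|_{X^{s,b}_h}$ with $\|\langle D_t\rangle^b\langle D_x\rangle^s U(-t)G\|_{L^2_{t,x}}$; since the temporal cutoff $\psi_\delta$ commutes with both $U(t)$ and $\langle D_x\rangle^s$, Plancherel in $x$ reduces matters, for each spatial frequency $\xi$, to the purely one-dimensional Sobolev inequality
$$
\|\psi_\delta v\|_{H^b(\mathbb R)}\leq C\delta^{b'-b}\|v\|_{H^{b'}(\mathbb R)},\qquad v\in H^{b'}(\mathbb R).
$$
The full claim then follows by squaring, integrating against $\langle\xi\rangle^{2s}$, and letting $\varepsilon\to 0$.

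The main obstacle is establishing this scalar $H^b$ estimate uniformly in the parameters $b,b'$. I would begin with the endpoint $b=0$, $0\leq b'<1/2$: using the scaling identity $\widehat{\psi_\delta}(\tau)=\delta\widehat\chi(\delta\tau)$, a direct convolution estimate that exploits the rapid decay of $\widehat\chi$ together with the hypothesis $b'<1/2$ produces the gain $\delta^{b'}$. The negative range $-1/2<b<0$ then follows by duality against $H^{-b}$, pairing $\psi_\delta v$ with test functions and invoking the already-proved positive case with roles reversed. The intermediate region $-1/2<b<b'<1/2$ comes out by bilinear interpolation between these endpoints. The strict inequalities $|b|,|b'|<1/2$ are essential here: multiplication by a nontrivial compactly supported cutoff fails to be bounded on $H^{1/2}$, due to the trace obstruction, so both the direct argument and its dual break down at the endpoint. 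A self-contained derivation of this scalar inequality is written out in Tao \cite{T}, \S2.6, and once it is in hand the proof of the lemma is complete.
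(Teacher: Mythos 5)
Your proof takes essentially the same route as the paper's: Lemma \ref{lem2} is obtained there precisely by the substitution $f\mapsto e^{\sigma\|D\|}f$, which commutes with restriction to the time slab and identifies $X^{\sigma,s,b}_h(\delta)$ with $X^{s,b}_h(\delta)$ isometrically, followed by an appeal to the classical $\sigma=0$ estimate in Section 2.6 of \cite{T}; your extension-and-cutoff reduction and the conjugation by the free propagator are exactly the standard mechanics behind that reference, and since you ultimately invoke \cite{T} for the scalar inequality, the argument as a whole is complete. One warning about the stand-alone sketch you give of that scalar inequality: the endpoints you actually establish are $(b,b')=(0,b')$ with $0\le b'<1/2$ and, by duality, $(b,b')=(b,0)$ with $-1/2<b\le 0$, and every estimate reachable by interpolating between these lies in the mixed-sign region $b\le 0\le b'$; so your scheme, as described, never reaches the same-sign ranges $0<b<b'<1/2$ and $-1/2<b<b'<0$ --- and the latter is exactly the range in which the paper applies the lemma (exponents $b-1<b'-1$ in $(-1/2,0)$ with $1/2<b<b'<1$). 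To close this one needs the additional diagonal endpoint $b'=b$, i.e.\ the uniform-in-$\delta$ multiplier bound $\|\psi_\delta v\|_{H^b}\lesssim\|v\|_{H^b}$ for $|b|<1/2$, and then interpolation of that against your $b=0$ case (this is how the argument in \cite{T} is closed). A small side remark: a fixed smooth compactly supported cutoff \emph{is} a bounded multiplier on $H^{1/2}$; what fails at the endpoint $b'=1/2$ is the uniform gain $\delta^{b'-b}$ (relatedly, sharp-interval cutoffs fail to be $H^{1/2}$-multipliers, which is the reason for the restriction $b<1/2$ in Lemma \ref{lem3}).
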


\begin{lem}\label{lem3}
Let $\sigma \geq 0$, $s\in\mathbb{R}$, $-1/2<b<1/2$ and $\delta>0$.
Then, for any time interval $I\subset[0,\delta]$,
$$
\|\chi_I f\|_{X^{\sigma,s,b}_h} \leq C\|f\|_{X^{\sigma,s,b}_h(\delta)},
$$
where $\chi_I(t)$ is the characteristic function of $I$, and the constant $C>0$ depends only on $b$.
\end{lem}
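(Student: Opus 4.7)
The plan is to peel off the decorations in $X^{\sigma,s,b}_h(\delta)$ one at a time, reducing to the classical fact that multiplication by the characteristic function of an interval is bounded on $H^b(\mathbb{R})$ for $|b|<1/2$.

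First I would remove the Gevrey weight. Because $\chi_I(t)$ is a function of $t$ alone, it commutes with the Fourier multiplier $e^{\sigma\|D\|}$, which acts in $x$. The map $g\mapsto e^{\sigma\|D\|}g$ is an isometry from $X^{\sigma,s,b}_h$ onto $X^{s,b}_h$ (and likewise on the restriction spaces, since the extension-infimum definition is invariant under conjugation by an isometry). So the lemma for general $\sigma\ge 0$ reduces at once to the case $\sigma=0$. Next, pick any extension $g\in X^{s,b}_h$ of $f$ with $\|g\|_{X^{s,b}_h}\le 2\|f\|_{X^{s,b}_h(\delta)}$. Since $I\subset[0,\delta]$, the product $\chi_I g$ depends only on $f$ and agrees with $\chi_I f$ a.e., so it is enough to show the pointwise-in-$g$ estimate
$$
\|\chi_I g\|_{X^{s,b}_h}\le C\|g\|_{X^{s,b}_h},\qquad -1/2<b<1/2,
$$
with $C$ independent of $I$; taking the infimum on the right then gives the stated restriction-norm inequality.

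Second, I would strip off the dispersion and the spatial regularity. Setting $u(t,x):=(e^{-ith(D)}g)(t,x)$, the identity $\widehat u(\tau,\xi)=\widehat g(\tau+h(\xi),\xi)$ gives $\|g\|_{X^{s,b}_h}=\|u\|_{H^b_tH^s_x}$, and since $\chi_I(t)$ commutes with $e^{-ith(D)}$ we likewise have $\|\chi_I g\|_{X^{s,b}_h}=\|\chi_I u\|_{H^b_tH^s_x}$. Taking Fourier transform in $x$ and applying Fubini, the weight $\langle\xi\rangle^{2s}$ decouples from the $t$-variable and the estimate reduces to the one-dimensional multiplier bound
$$
\|\chi_I v\|_{H^b(\mathbb{R})}\le C\|v\|_{H^b(\mathbb{R})},\qquad -1/2<b<1/2,
$$
with $C$ independent of the interval $I\subset\mathbb{R}$.

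The heart of the argument is this last 1D estimate. For $b=0$ it is trivial; for $0<b<1/2$ I would use the Gagliardo--Slobodeckij characterization
$$
\|v\|_{H^b}^2\sim\|v\|_{L^2}^2+\iint_{\mathbb{R}^2}\frac{|v(t)-v(s)|^2}{|t-s|^{1+2b}}\,dt\,ds,
$$
together with the splitting $\chi_I(t)v(t)-\chi_I(s)v(s)=\chi_I(t)(v(t)-v(s))+(\chi_I(t)-\chi_I(s))v(s)$. The first piece is controlled by $\|v\|_{H^b}^2$ directly; the second, after integrating out the off-diagonal variable near each endpoint of $I=(a,c)$, leads to a one-sided Hardy inequality of the form $\int_a^\infty|v(s)|^2(s-a)^{-2b}\,ds\lesssim\|v\|_{H^b}^2$ (valid precisely for $0<b<1/2$) with constant independent of $a$. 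The range $-1/2<b<0$ then follows by duality, using $(H^b)^*=H^{-b}$ and the self-adjointness of multiplication by $\chi_I$.

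The genuine difficulty is confined to the third step, and even there it is not conceptual but bookkeeping: one must check that all constants in the Hardy inequality and the Gagliardo splitting are uniform in the interval $I$, and that the duality argument at negative $b$ does not lose the endpoints $\pm 1/2$. The rest of the proof is a direct reduction that isolates the $\sigma$-weight, the spatial regularity $s$, and the dispersion $h$ from the essential one-dimensional content.
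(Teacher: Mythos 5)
Your argument is correct, and its skeleton is the same as the one the paper relies on: the paper does not prove this lemma itself, but disposes of the Gevrey weight by exactly your substitution $f\mapsto e^{\sigma\|D\|}f$ (which commutes with multiplication by $\chi_I(t)$ and is an isometry between the $X^{\sigma,s,b}_h$- and $X^{s,b}_h$-scales, including the restriction norms) and then cites Lemma 7 of \cite{S} for the case $\sigma=0$; that cited lemma in turn rests on the classical fact you isolate, namely that sharp time cutoffs are bounded on $H^b_t$ uniformly in the interval precisely for $|b|<1/2$, transferred to $X^{s,b}_h$ by conjugation with the free group $e^{-ith(D)}$ and Plancherel in $\xi$. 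The genuine difference is that you prove this one-dimensional multiplier bound from scratch (Gagliardo--Slobodeckij seminorm, the splitting $\chi_I(t)v(t)-\chi_I(s)v(s)=\chi_I(t)(v(t)-v(s))+(\chi_I(t)-\chi_I(s))v(s)$, the fractional Hardy inequality $\int |v(s)|^2|s-a|^{-2b}\,ds\lesssim\|v\|_{H^b}^2$ for $0<b<1/2$, and duality for $-1/2<b<0$), whereas the paper outsources it; your version buys self-containedness at the cost of the bookkeeping you already flag. Two small points worth making explicit if you write this up: for $-1/2<b<0$ the product $\chi_I v$ should be defined by density (or as the adjoint operator your duality step produces), since multiplication by a discontinuous function is not a priori defined on $H^b$ with $b<0$; and in the reduction step note that uniformity in $a$ (hence in $I$) of the Hardy constant follows from translation invariance, while the Fubini step in $\xi$ only uses that the 1D constant is independent of $\xi$. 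Neither affects correctness.
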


Finally we consider the Cauchy problem
$$
\begin{cases}(i\partial_t-h(D))u = F(t,x), \\
u(0,x)=f(x),
\end{cases}
$$
whose solution is written as
\begin{equation}\label{DF}
u(t,x)=e^{-ith(D)}f(x)-i\int_0^t e^{-i(t-s)h(D)}F(s,\cdot)ds
\end{equation}
with the Fourier multiplier $e^{-ith(D)}$ given by
$$e^{-ith(D)}f(x)=\frac1{(2\pi)^d}\int_{\mathbb{R}^d}e^{ix\cdot\xi}e^{-ith(\xi)}\widehat{f}(\xi)d\xi.$$
The following is nothing more than the standard energy estimate in $X_h^{s,b}(\delta)$-spaces (see Proposition 2.12 in \cite{T}):

\begin{lem}\label{sol_estimate}
Let $\sigma \geq 0$, $s\in\mathbb{R}$, $1/2<b\leq 1$ and $0<\delta\leq 1$. Then we have
$$
\big\| e^{-ith(D)}f\big\|_{X^{\sigma,s,b}_h(\delta)}\leq C\| f\|_{G^{\sigma,s}}
$$	
and
$$
\bigg\|\int^t_0e^{-i(t-s)h(D)}F(s,\cdot)ds\bigg\|_{X^{\sigma,s,b}_h(\delta)}\leq C \| F\|_{X^{\sigma,s,b-1}_{h}(\delta)}.
$$
Here the constant $C>0$ depends only on b.
\end{lem}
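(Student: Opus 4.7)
The plan is to reduce from the Gevrey--Bourgain to the ordinary Bourgain setting by the substitution principle already invoked for Lemmas \ref{lem1}--\ref{lem3}: since $e^{-ith(D)}$ and $e^{\sigma\|D\|}$ are both Fourier multipliers in $x$ they commute, and $\|g\|_{X^{\sigma,s,b}_h}=\|e^{\sigma\|D\|}g\|_{X^{s,b}_h}$ by definition. Replacing $f$ by $e^{\sigma\|D\|}f$ in the homogeneous estimate and $F$ by $e^{\sigma\|D\|}F$ in the inhomogeneous one, the two bounds reduce to the classical $X^{s,b}_h$--estimates, i.e.\ Proposition 2.12 of \cite{T}. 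So the only task is to indicate how those $\sigma=0$ estimates go.

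For the homogeneous bound I would construct an explicit extension of $e^{-ith(D)}f$ from $(0,\delta)$ to $\mathbb{R}^{1+d}$. Fix once and for all a cutoff $\psi\in C_c^\infty(\mathbb{R})$ with $\psi\equiv 1$ on $[-1,1]$ and set $u(t,x)=\psi(t)e^{-ith(D)}f(x)$. Because $0<\delta\le 1$, $u$ agrees with $e^{-ith(D)}f$ on $(0,\delta)$, and its space--time Fourier transform factorises as $\widehat{u}(\tau,\xi)=\widehat{\psi}(\tau-h(\xi))\widehat{f}(\xi)$. Inserting this into the definition of $\|\cdot\|_{X^{s,b}_h}$ separates the $(\tau,\xi)$--integrations, and the $\tau$--integral equals $\int \langle\tau\rangle^{2b}|\widehat{\psi}(\tau)|^2\,d\tau$, which is finite since $\widehat{\psi}$ is Schwartz. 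This gives the desired control by $\|f\|_{H^s}$, independently of $\delta$.

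The inhomogeneous estimate is the genuine core of the lemma. Pick an extension $\widetilde F$ of $F$ with $\|\widetilde F\|_{X^{s,b-1}_h}\le 2\|F\|_{X^{s,b-1}_h(\delta)}$ and propose, with the same cutoff $\psi$,
$$
v(t,x)=\psi(t)\int_0^t e^{-i(t-s)h(D)}\widetilde F(s,\cdot)\,ds.
$$
After partial Fourier inversion $\widetilde F(s,x)=\int e^{is\tau}\widetilde F^\tau(x)\,d\tau$, one arrives at the representation
$$
v(t,x)=\psi(t)\int e^{-ith(D)}\,\frac{e^{it(\tau+h(D))}-1}{i(\tau+h(D))}\,\widetilde F^\tau(x)\,d\tau,
$$
and splits the $\tau$--integrand according to the modulation size $|\tau+h(\xi)|\gtrless 1$. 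On the high--modulation piece one uses the factor $(\tau+h(\xi))^{-1}$ directly and computes the $X^{s,b}_h$--norm as in the homogeneous step. On the low--modulation piece, where this quotient is singular, one Taylor--expands $e^{it(\tau+h(\xi))}-1$ to cancel the pole, producing a sum of terms of the form $\frac{(it)^k}{k!}\psi(t)(\tau+h(\xi))^{k-1}\widetilde F^\tau$; each factor $t^k\psi(t)$ is a fixed Schwartz function, so the computation of the homogeneous step applies term by term and the resulting series converges because $b>1/2$ ensures the summability in $k$.

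The main obstacle is precisely this low--modulation region of the Duhamel estimate: the naive use of $(\tau+h(\xi))^{-1}$ is singular, and the Taylor trick must be arranged so that the $k$--series converges uniformly in $\tau,\xi$; this is what forces the lower bound $b>1/2$. The upper bound $b\le 1$, as well as $\delta\le 1$, enter only to keep the $\tau$--integrals $\int\langle\tau\rangle^{2b}|\widehat\psi(\tau)|^2\,d\tau$ and their low-modulation analogues finite, with constants depending on $b$ alone, as claimed.
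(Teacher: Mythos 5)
Your proposal is correct and takes essentially the same route as the paper: commute $e^{\sigma\|D\|}$ through the Fourier multipliers $e^{-ith(D)}$ to reduce to the case $\sigma=0$, which is the standard linear $X^{s,b}_h$ energy estimate (Proposition 2.12 in \cite{T}), and your cutoff/Duhamel/modulation-splitting sketch is precisely how that proposition is proved. One small correction of attribution: in the low-modulation piece the $k$-series converges thanks to the $1/k!$ factor, not because $b>1/2$; the role of $b>1/2$ is in the high-modulation piece, where a Cauchy--Schwarz in $\tau$ needs $\int_{|\lambda|>1}\langle\lambda\rangle^{-2b}\,d\lambda<\infty$, and the upper bound $b\le1$ (equivalently $b-1\le 0$) is what the Duhamel reduction requires rather than finiteness of $\int\langle\tau\rangle^{2b}|\widehat\psi(\tau)|^2\,d\tau$, which holds for every $b$ since $\widehat\psi$ is Schwartz.
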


\section{Estimates in Gevrey-Bourgain spaces}\label{sec4}

Before proving Theorem \ref{maintheorem2} in earnest, we present in this section some estimates in Gevrey-Bourgain spaces.
The first ones are bilinear estimates in Lemma \ref{estimate} below. They will play a key role when estimating the product terms that appear in the system \eqref{KGS2} to ultimately obtain the desired local well-posedness in Gevrey spaces. We will see this first in the next secton, and prove the lemma thereafter.

\begin{lem}\label{estimate}
Let $d=1,2,3$. If $\sigma>0$ and $s>-1/2$, then we have
\begin{equation}\label{estimate1}
\|fg\|_{X^{\sigma,0,b'-1}}\lesssim\|f\|_{X^{\sigma,0,b}}\|g\|_{X^{\sigma,s,b}_\pm}
\end{equation}
and
\begin{equation}\label{estimate2}
\|f\bar{g}\|_{X_\pm^{\sigma,-s,b'-1}}\lesssim\|f\|_{X^{\sigma,0,b}}\|g\|_{X^{\sigma,0,b}}
\end{equation}
whenever $1/2<b\leq b'<\min\{(6+2s-d)/4,1,s+1\}$.
\end{lem}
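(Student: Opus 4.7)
The plan is to reduce both estimates from the Gevrey-Bourgain setting ($\sigma>0$) to the classical $\sigma=0$ Bourgain-space bilinear estimates for the Klein-Gordon--Schr\"{o}dinger system.

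First I would exploit the fact that $\|\xi\|=\sum_{i=1}^{d}|\xi_i|$ is a norm on $\mathbb{R}^d$, so by the triangle inequality
\[
e^{\sigma\|\xi\|}\leq e^{\sigma\|\xi-\eta\|}e^{\sigma\|\eta\|}.
\]
Setting $\widehat{F}(\tau,\xi)=e^{\sigma\|\xi\|}|\widehat{f}(\tau,\xi)|$ and analogously $\widehat{G}$, we have $\|F\|_{X^{0,b}}=\|f\|_{X^{\sigma,0,b}}$, and similarly $\|G\|_{X^{s,b}_\pm}=\|g\|_{X^{\sigma,s,b}_\pm}$ (or $\|G\|_{X^{0,b}}=\|g\|_{X^{\sigma,0,b}}$ in the second estimate). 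Inserting the submultiplicative bound above inside the Fourier convolutions representing $\widehat{fg}$ and $\widehat{f\bar g}$ yields the pointwise dominations
\[
e^{\sigma\|\xi\|}|\widehat{fg}(\tau,\xi)|\leq\widehat{FG}(\tau,\xi),\qquad e^{\sigma\|\xi\|}|\widehat{f\bar g}(\tau,\xi)|\leq\widehat{F\bar G}(\tau,\xi).
\]
The lemma thereby reduces to the $\sigma=0$ bilinear estimates
\[
\|FG\|_{X^{0,b'-1}}\lesssim\|F\|_{X^{0,b}}\|G\|_{X^{s,b}_\pm},\qquad \|F\bar G\|_{X^{-s,b'-1}_\pm}\lesssim\|F\|_{X^{0,b}}\|G\|_{X^{0,b}}.
\]

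Second, these Bourgain-space bilinear estimates I would prove by duality. Pairing each with an $L^2$-test function reduces it to controlling a trilinear integral on the set $\{\tau=\tau_1+\tau_2,\,\xi=\xi_1+\xi_2\}$, weighted by three dispersive denominators involving the Schr\"{o}dinger paraboloids $\tau_j+|\xi_j|^2=0$ and/or the Klein-Gordon cone $\tau=\pm|\xi|$. Splitting into cases according to which of the three denominators attains the maximum, applying Cauchy-Schwarz, and invoking the standard $L^4_{t,x}$ and $L^p_{t,x}$ Strichartz bounds for the free Schr\"{o}dinger and Klein-Gordon flows in dimensions $d=1,2,3$ yields the result. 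The constraints $1/2<b\leq b'<\min\{(6+2s-d)/4,1,s+1\}$ are exactly those needed for the dualized weights to be square-integrable over the requisite regions and for the relevant Strichartz exponents to be admissible.

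The main obstacle is the second estimate: there the output dispersive weight lives on the Klein-Gordon cone while the two inputs sit on Schr\"{o}dinger paraboloids, and one carries the additional low-regularity factor $\langle\xi\rangle^{-s}$ which becomes a genuine loss when $s\in(-1/2,0)$. Controlling the resonance interaction that sends two paraboloidal frequencies to the cone, via the identity $(\tau+|\xi|^2)-(\tau_1+|\xi_1|^2)-(\tau_2+|\xi_2|^2)=-2\xi_1\cdot\xi_2$ combined with the cone weight, is precisely where the sharp condition $b'<(6+2s-d)/4$ becomes binding and where the case $d=3$ is most delicate; this in turn dictates the rate $p=8/(4-d)$ appearing in Theorem \ref{maintheorem2}. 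The detailed Fourier computations are postponed to Section \ref{sec6}.
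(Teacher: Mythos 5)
Your reduction from $\sigma>0$ to $\sigma=0$ is correct and is exactly what the paper does: use the triangle inequality $e^{\sigma\|\xi\|}\leq e^{\sigma\|\xi-\xi_1\|}e^{\sigma\|\xi_1\|}$ inside the convolution defining $\widehat{fg}$ to obtain $\|fg\|_{X^{\sigma,s,b}_h}\leq\|(e^{\sigma\|D\|}f)(e^{\sigma\|D\|}g)\|_{X^{s,b}_h}$, and similarly for $f\bar g$. That step is fine.

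The gap is in the second step, where the actual $\sigma=0$ bilinear estimate is established. You propose duality plus Cauchy--Schwarz plus ``standard $L^4_{t,x}$ and $L^p_{t,x}$ Strichartz bounds.'' This is precisely the approach that, as the paper's own remark after the lemma points out, is known to produce only an essentially \emph{zero} uniform gap $b'-b$ for $d=2,3$. The novelty of the lemma is exactly that it achieves a positive uniform gap $b'-b$ in dimensions $2$ and $3$, with $b'$ allowed to go up to $\min\{(6+2s-d)/4,1,s+1\}$; the range $p=\max\{8/(4-d),4\}$ in the main theorem depends on this gap. So your sketch does not yield the stated inequality; it recovers the older, weaker estimates.

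The mechanism actually used in Section \ref{sec6} is different in kind. After dualizing to a trilinear integral, the paper works with the resonance function $\big||\xi_1+\xi_2|^2-|\xi_1|^2\mp|\xi_2|\big|=2|\xi_1||\xi_2||B|$ with $B=\cos\alpha_{12}+(|\xi_2|\mp1)/(2|\xi_1|)$ (note: not the pure Schr\"odinger identity $-2\xi_1\cdot\xi_2$ you wrote -- the Klein--Gordon cone weight changes the algebra). It then performs a dyadic decomposition in the spatial frequencies and in $|B|$, covers the annuli by cubes of side $\min\{M_0,M_1,M_2\}$, changes variables $(\xi_1,\xi_2)\mapsto(u,v,\xi_{2,j})$, and computes the Jacobian $|2\xi_{1,j}\pm\xi_{2,j}/|\xi_2||$ directly, splitting into four cases according to whether $|B|\gtrsim1$ or $\ll1$ and which $M_i$ dominates; Lemma \ref{ilemma} handles the small-frequency region. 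The gain in $b'-b$ comes from the geometry of the resonance set $B=0$ (a thin shell of thickness $\lesssim vM_1$) and the explicit Jacobian factor $\sim M_1^{-1}$, not from any Strichartz inequality. To repair your proposal you would need to replace the Strichartz step with this resonance/Jacobian analysis; otherwise the claimed admissible range of $(b,b')$ will not come out.
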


\begin{rem}
As we will see later, uniform difference between $b$ and $b'$ when $s\leq0$ in the lemma
is needed to get a lower bound on the radius of spatial analyticity,
and larger difference yields better results. For $d=1$, it is known \cite{HGHL} that $b'-b$ can be (uniformly) as large as $(1/2)^-$, but for $d=2,3$, the largest possible difference known so far is essentially zero: see \cite{H,H2,H3,C}.
The lemma allows us to have significant differences for $d=2,3$ as well.
\end{rem}

When extending the local solution globally in time, we need to control the growth of the Gevrey norm of the solution.
We will carry out this by obtaining an approximate conservation law in the next section.
In this process the bilinear operator
\begin{equation}\label{F}
F(v,m):=vm- e^{\sigma\|D\|}(me^{-\sigma\|D\|}v)
\end{equation}
will appear and the following related estimates will play an important role:

\begin{lem}\label{F_lemma}
Let $d=1,2,3$ and $1/2<b\leq b'<\min\{(6-d)/4,1\}$. Then we have
\begin{equation*}
\|\overline{F(v,m)}\|_{X^{0,b'-1}}\lesssim\sigma\|v\|_{X^{0,b}}\|m\|_{X^{\sigma,1,b}_\pm}.
\end{equation*}
\end{lem}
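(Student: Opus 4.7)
The plan is to reduce Lemma \ref{F_lemma} to the bilinear Schr\"odinger--Klein-Gordon estimate \eqref{estimate1} via a pointwise Fourier-side bound on $F(v,m)$, with the prefactor $\sigma$ emerging naturally from a mean-value estimate on the exponential symbol.

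Taking the space--time Fourier transform, one computes
$$\widehat{F(v,m)}(\tau,\xi) = \iint\bigl[1-e^{\sigma(\|\xi\|-\|\xi-\eta\|)}\bigr]\,\widehat v(\tau-\tau',\xi-\eta)\,\widehat m(\tau',\eta)\,d\tau'\,d\eta,$$
so the entire ``analyticity defect'' is carried by the multiplier $1-e^{\sigma(\|\xi\|-\|\xi-\eta\|)}$. The elementary inequality $|1-e^x|\le|x|e^{|x|}$ together with the reverse triangle inequality $|\|\xi\|-\|\xi-\eta\||\le\|\eta\|$ for the $\ell^1$ symbol $\|\cdot\|=\sum_{i=1}^d|\xi_i|$ yields
$$\bigl|1-e^{\sigma(\|\xi\|-\|\xi-\eta\|)}\bigr|\ \le\ \sigma\,\|\eta\|\,e^{\sigma\|\eta\|}.$$
Introducing auxiliary functions $\tilde v,\tilde m$ with non-negative Fourier transforms $\widehat{\tilde v}=|\widehat v|$ and $\widehat{\tilde m}(\tau',\eta)=\|\eta\|\,e^{\sigma\|\eta\|}\,|\widehat m(\tau',\eta)|$, this gives the pointwise domination $|\widehat{F(v,m)}|\lesssim\sigma\,\widehat{\tilde v\tilde m}$. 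Since Bourgain norms depend only on the modulus of the Fourier transform, $\|\tilde v\|_{X^{0,b}}=\|v\|_{X^{0,b}}$, and $\|\eta\|\lesssim\langle\eta\rangle$ gives $\|\tilde m\|_{X^{0,b}_\pm}\lesssim\|m\|_{X^{\sigma,1,b}_\pm}$.

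For the target norm, complex conjugation reverses the sign of the Schr\"odinger phase, so that
$$\|\overline{F(v,m)}\|_{X^{0,b'-1}}=\bigl\|\langle\tau-|\xi|^2\rangle^{b'-1}\,|\widehat{F(v,m)}(\tau,\xi)|\bigr\|_{L^2_{\tau,\xi}}.$$
Combined with the preceding pointwise bound, the lemma reduces to
$$\bigl\|\langle\tau-|\xi|^2\rangle^{b'-1}\,\widehat{\tilde v\tilde m}\bigr\|_{L^2_{\tau,\xi}}\lesssim \|\tilde v\|_{X^{0,b}}\|\tilde m\|_{X^{0,b}_\pm},$$
which is exactly \eqref{estimate1} at $s=0$, $\sigma=0$, only with the sign of the Schr\"odinger phase reversed in the target space. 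Note that the threshold $b'<\min\{(6-d)/4,1\}$ in Lemma \ref{F_lemma} coincides with $\min\{(6+2s-d)/4,1,s+1\}$ of Lemma \ref{estimate} evaluated at $s=0$. The proof of Lemma \ref{estimate} (deferred to Section \ref{sec6}) rests on Strichartz inequalities for $e^{\pm it\Delta}$ and $e^{\mp it|D|}$ together with dyadic decomposition, all of which are symmetric under this sign flip, so the required variant follows by an identical argument.

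The main obstacle is this last piece of sign-of-phase bookkeeping: verifying that conjugation flips the Schr\"odinger phase in the target Bourgain space and that the proof of \eqref{estimate1} carries over unchanged. Once that is handled, the rest is routine Fourier manipulation, and the crucial factor $\sigma$ in the conclusion appears directly from the mean-value bound $|1-e^x|\le|x|e^{|x|}$ on the exponential symbol.
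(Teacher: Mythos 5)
Your proposal follows the same route as the paper: take the space--time Fourier transform of the bilinear defect $F(v,m)$, bound the multiplier $1-e^{\sigma(\|\xi\|-\|\xi-\eta\|)}$ pointwise by $\sigma\|\eta\|e^{\sigma\|\eta\|}$ via the mean-value inequality $|e^x-1|\le|x|e^{|x|}$ and the reverse triangle inequality for $\|\cdot\|=\sum|\xi_i|$, and then invoke the bilinear estimate \eqref{estimate1} at $s=0$ to close.

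The one place the two expositions differ is the conjugation bookkeeping, and it is worth a remark. The paper applies $\mathcal{F}$ to $\overline{F(v,m)}$ directly, so the convolution involves $\widehat{\bar v}$ and $\widehat{\bar m}$, and at the final step it asserts $\||v|\|_{X^{0,b}}\le\|v\|_{X^{0,b}}$, where $|v|$ denotes the function with Fourier transform $|\widehat{\bar v}|$. Since $|\widehat{\bar v}(\tau,\xi)|=|\widehat v(-\tau,-\xi)|$, one actually has $\||v|\|_{X^{0,b}}=\|\langle\tau-|\xi|^2\rangle^b\widehat v\|_{L^2}$, i.e.\ the conjugate-phase norm, so this step quietly flips the Schr\"odinger phase. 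You handle this more transparently: you compute $\widehat{F(v,m)}$ itself and note that $\|\overline G\|_{X^{0,b'-1}}=\|\langle\tau-|\xi|^2\rangle^{b'-1}\widehat G\|_{L^2}$, so the conjugation is pushed entirely into the output weight. However, your resulting reduction asks for the bilinear estimate with weight $\langle\tau-|\xi|^2\rangle^{b'-1}$ on the product but $\langle\tau+|\xi|^2\rangle^{b}$ on $\tilde v$; this is a \emph{mixed} sign configuration, not the global conjugation of \eqref{estimate1}, so the resonance function in Section~\ref{sec6} becomes $|\xi_1+\xi_2|^2+|\xi_1|^2\mp|\xi_2|$ rather than $|\xi_1+\xi_2|^2-|\xi_1|^2\mp|\xi_2|$. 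Your claim that the Section~\ref{sec6} argument ``carries over unchanged'' therefore overstates the case slightly (and the proof there is a direct Cauchy--Schwarz/dyadic estimate, not Strichartz). The honest statement is that both your write-up and the paper's rely on the bilinear estimate being insensitive to these one-sided phase flips, a fact neither verifies explicitly; since the modified resonance is at least as favorable this is very plausible, and in fact the issue disappears entirely if one states the lemma for $\|F(v,m)\|_{X^{0,b'-1}}$ rather than $\|\overline{F(v,m)}\|_{X^{0,b'-1}}$, which is what is actually produced by the Cauchy--Schwarz step in the proof of Proposition~\ref{almost_conservation_law}.

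Modulo that shared sign subtlety, your argument is correct and essentially identical to the paper's.
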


\begin{proof}
We first take the space-time Fourier transform $\mathcal{F}$ of $\bar{F}$ to see
\begin{align*}
\mathcal{F}\big[\overline{F(v,m)}\big](\tau,\xi)
&=\overline{\int_{\mathbb{R}^{1+d}}\big(1-e^{\sigma(\|\xi\|-\|\xi-\xi_1\|)}\big)\widehat{v}(\tau_1-\tau,\xi_1-\xi)\widehat{m}(-\tau_1,-\xi_1) d\tau_1 d\xi_1}\\
&=\int_{\mathbb{R}^{1+d}}\big(1-e^{\sigma(\|\xi\|-\|\xi-\xi_1\|)}\big)\widehat{\bar{v}}(\tau-\tau_1,\xi-\xi_1)\widehat{\bar{m}}(\tau_1,\xi_1) d\tau_1 d \xi_1.
\end{align*}
Then we estimate
\begin{align*}
\big|\mathcal{F}\big[\overline{F(v,m)}\big](\tau,\xi)\big|
&\leq\int\big|\big(1-e^{\sigma(\|\xi\|-\|\xi-\xi_1\|)}\big)\widehat{\bar{v}}(\tau-\tau_1,\xi-\xi_1)\widehat{\bar{m}}(\tau_1,\xi_1)\big|d\tau_1 d \xi_1\\
&\leq \sigma\int|\widehat{\bar{v}}(\tau-\tau_1,\xi-\xi_1)|\|\xi_1\|e^{\sigma\| \xi_1\|}|\widehat{\bar{m}}(\tau_1,\xi_1)| d\tau_1 d \xi_1\\
&= \sigma\mathcal{F}\big[|v|e^{\sigma\|D\|}\partial_x|m|\big](\tau,\xi)
\end{align*}
using
$$
\big|e^{\sigma(\|\xi\|-\|\xi-\xi_1\|)}-1\big|\leq\big|e^{\sigma\| \xi_1\|}-1\big|
\leq \sigma\|\xi_1\|e^{\sigma\|\xi_1\|}
$$
which follows from the simple inequality
$e^x-1 \leq xe^x$ for $x\geq0$.
Finally, by \eqref{estimate1} with $s=0$, we obtain
\begin{align*}
\|\overline{F(v,m)}\|_{X^{0,b'-1}} &\leq\sigma\big\||v|e^{\sigma\|D\|}\partial_x |m|\big\|_{X^{0,b'-1}}\\
&\lesssim\sigma\big\||v|\big\|_{X^{0,b}}\big\|e^{\sigma\|D\|}\partial_x|m|\big\|_{X_\pm^{0,b}}\\
&\leq\sigma\|v\|_{X^{0,b}}\|m\|_{X^{\sigma,1,b}_\pm}.
\end{align*}
\end{proof}

\section{Proof of Theorem \ref{maintheorem2}}\label{sec5}
Now we are ready to prove Theorem \ref{maintheorem2}.
In Subsection \ref{sec5.1} we shall show that the radius of analyticity of the solution is positive locally in time, and then in Subsection \ref{sec5.2} we bound the growth of the Gevrey norm of the solution to apply the local result 
 repeatedly to cover time intervals of arbitrary length in Subsection \ref{sec5.3}.

\subsection{Local well-posedness}\label{sec5.1}
In view of Lemma \ref{lem1},
we basically apply Picard's iteration in the $X_h^{\sigma,s,b}(\delta)$-space to
establish the following local well-posedness in $G^{\sigma,0} \times G^{\sigma,1} \times G^{\sigma,1}$, with a lifespan $\delta > 0$.
Equally the radius of analyticity remains strictly positive in a short time interval $0 \leq t \leq\delta$,
where $\delta > 0$ depends on the norm of the initial data.

\begin{thm}\label{local_wellposedness2}
Let $d=1,2,3$ and $\sigma>0$.
Then, for any initial data $(u_0, \phi_+,\phi_-) \in G^{\sigma,0} \times G^{\sigma,1} \times G^{\sigma,1}$,
there exist $\delta>0$ and a unique\footnote{The uniqueness is immediate since the solution is certainly $C^\infty$.} solution
$$(u,n_+,n_-) \in C([0,\delta];G^{\sigma,0} \times G^{\sigma,1} \times G^{\sigma,1})$$
of the Cauchy problem \eqref{KGS2}.
Here we may take
\begin{equation}\label{delta}
\delta=C(1+\| u_0\|_{G^{\sigma,0}} +\| \phi_+\|_{G^{\sigma,1}} +\| \phi_-\|_{G^{\sigma,1}})^{-q^+}
\end{equation}
for $q=\max\{4/(4-d),2\}$ and for some constant $C>0$.
Furthermore, for $b=(1/2)^+$,
\begin{equation}\label{lowe1}
\| u\|_{X^{\sigma,0,b}(\delta)} \lesssim \| u_0\|_{G^{\sigma,0}}
\end{equation}
and
\begin{equation}\label{lowe2}
\| n_\pm \|_{X^{\sigma,1,b}_\pm (\delta)} \lesssim \|\phi_\pm\|_{G^{\sigma,1}} + \| u_0\|_{G^{\sigma,0}}.
\end{equation}

\end{thm}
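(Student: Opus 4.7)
The plan is to recast \eqref{KGS2} via the Duhamel formula \eqref{DF} as a fixed point problem in the product Gevrey--Bourgain space $Z(\delta):=X^{\sigma,0,b}(\delta)\times X^{\sigma,1,b}_{+}(\delta)\times X^{\sigma,1,b}_{-}(\delta)$ with $b=(1/2)^{+}$, and then to find $b'>b$ such that the smallness factor $\delta^{b'-b}$ provided by Lemma \ref{lem2} allows Picard iteration to close. Concretely, for $(u,n_{+},n_{-})\in Z(\delta)$ define $\Phi=(\Phi_{0},\Phi_{+},\Phi_{-})$ by
$$\Phi_{0}(t)=e^{it\Delta}u_{0}+i\int_{0}^{t}e^{i(t-s)\Delta}\tfrac{u(n_{+}+n_{-})}{2}\,ds,$$
$$\Phi_{\pm}(t)=e^{\mp it\langle D\rangle}\phi_{\pm}\mp i\int_{0}^{t}e^{\mp i(t-s)\langle D\rangle}\langle D\rangle^{-1}|u|^{2}\,ds.$$
I will show $\Phi$ is a contraction on a closed ball in $Z(\delta)$ of radius $\sim N_{0}:=\|u_{0}\|_{G^{\sigma,0}}+\|\phi_{+}\|_{G^{\sigma,1}}+\|\phi_{-}\|_{G^{\sigma,1}}$, provided $\delta$ is chosen as in \eqref{delta}; \eqref{lowe1}--\eqref{lowe2} will follow from the resulting a priori bound, and continuity in $t$ from Lemma \ref{lem1}.

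The energy estimate (Lemma \ref{sol_estimate}) bounds the linear and Duhamel pieces of $\Phi$ in terms of $G^{\sigma,\cdot}$-norms of data and $X^{\sigma,\cdot,b-1}$-norms of the forcing. For the forcings I would first gain $\delta^{b'-b}$ using Lemma \ref{lem2}, and then apply the bilinear estimates of Lemma \ref{estimate}: inequality \eqref{estimate1} with $s=1$ controls $un_{\pm}$ via $\|u\|_{X^{\sigma,0,b}(\delta)}\|n_{\pm}\|_{X^{\sigma,1,b}_{\pm}(\delta)}$, while \eqref{estimate2} with $s=0$ (together with $\|\langle D\rangle^{-1}|u|^{2}\|_{X^{\sigma,1,b-1}_{\pm}}=\||u|^{2}\|_{X^{\sigma,0,b-1}_{\pm}}$) gives $\||u|^{2}\|_{X^{\sigma,0,b'-1}_{\pm}(\delta)}\lesssim\|u\|_{X^{\sigma,0,b}(\delta)}^{2}$. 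Writing $Y$ for the $Z(\delta)$-norm of $(u,n_{+},n_{-})$, these three bounds yield the quadratic inequality
$$Y\leq CN_{0}+C\delta^{b'-b}Y^{2},$$
which has a solution with $Y\leq 2CN_{0}$ as soon as $C\delta^{b'-b}(2CN_{0})\leq 1/2$. The difference estimate is analogous and produces a contraction factor of the same form, so $\Phi$ has a unique fixed point on the ball $\{Y\leq 2CN_{0}\}$.

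The main technical point is to extract from Lemma \ref{estimate} the largest admissible $b'-b$ so that the lifespan \eqref{delta} with $q=\max\{4/(4-d),2\}$ emerges from the condition $\delta^{b'-b}N_{0}\lesssim 1$, i.e.\ $\delta\sim N_{0}^{-1/(b'-b)}$. The product estimate \eqref{estimate1} with $s=1$ requires only $b'<\min\{(8-d)/4,1,2\}$ and therefore permits $b'-b$ arbitrarily close to $(1/2)^{-}$ in every dimension $d\in\{1,2,3\}$. However, the resonant estimate \eqref{estimate2} with $s=0$ enforces the stricter bound $b'<\min\{(6-d)/4,1\}$, which in $d=1,2$ still allows $b'-b$ up to $(1/2)^{-}$ but in $d=3$ only up to $(1/4)^{-}$. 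Choosing $b'$ just below this binding threshold gives $1/(b'-b)$ arbitrarily close to (but strictly greater than) $q=\max\{4/(4-d),2\}$, which is exactly the exponent $q^{+}$ appearing in \eqref{delta}; the strict inequality in the constraint on $b'$ is what forces the $+$ in the exponent. This balancing of the two bilinear estimates against the scaling of the lifespan is the only non-routine step of the argument.
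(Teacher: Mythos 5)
Your proposal follows essentially the same route as the paper: Picard iteration (phrased as a contraction mapping) in $X^{\sigma,0,b}(\delta)\times X^{\sigma,1,b}_{+}(\delta)\times X^{\sigma,1,b}_{-}(\delta)$, extracting the smallness factor $\delta^{b'-b}$ from Lemma \ref{lem2} and feeding the forcings through Lemma \ref{estimate}, with the correct identification that the resonant estimate \eqref{estimate2} with $s=0$ (giving $b'<\min\{(6-d)/4,1\}$) is the binding constraint, hence $q=\max\{4/(4-d),2\}$. The only cosmetic deviations are that the paper applies \eqref{estimate1} with $s=0$ and then passes to the $X^{\sigma,1,b}_{\pm}$-norm by embedding, whereas you take $s=1$ directly (equivalent, since the binding constraint is unchanged), and that your single quadratic a priori bound $Y\lesssim N_{0}$ needs a brief bootstrap back through the two separate Duhamel inequalities to recover the refined forms \eqref{lowe1}--\eqref{lowe2}, which the paper's inductive bounds \eqref{uk}--\eqref{nk} give explicitly.
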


\begin{proof}
Fix $\sigma>0$ and $(u_0, \phi_+, \phi_-) \in G^{\sigma,0} \times G^{\sigma,1} \times G^{\sigma,1}$.
By Lemma \ref{lem1} we shall employ an iteration argument in the space $X^{\sigma,0,b}(\delta)$ and $X^{\sigma,1,b}_{+}(\delta)$
instead of $G^{\sigma,0}$ and $G^{\sigma,1}$, respectively.
Define the Picard iterates $\big\{\big(u^{(k)},n_+^{(k)},n_-^{(k)}\big)\big\}^{\infty}_{k=0}$ by
$$
\begin{cases}
i\partial_t u^{(0)} + \Delta u^{(0)} = 0, &u^{(0)}(0,x) = u_0(x), \\
i\partial_t n^{(0)}_+ - \langle D \rangle n^{(0)}_+  = 0, &n^{(0)}_+(0,x) = \phi_+(x),\\
i\partial_t n^{(0)}_- + \langle D \rangle n^{(0)}_-  = 0, &n^{(0)}_-(0,x) = \phi_-(x),
\end{cases}
$$
and
$$
\begin{cases}
i\partial_t u^{(k)} + \Delta u^{(k)} = -u^{(k-1)} \big(n^{(k-1)}_+ + n^{(k-1)}_-\big)/2, &u^{(k)}(0,x) = u_0(x), \\
i\partial_t n^{(k)}_+ - \langle D \rangle n^{(k)}_+  = -\langle D \rangle^{-1} \big|u^{(k-1)}\big|^2, &n^{(k)}_+(0,x) = \phi_+(x),\\
i\partial_t n^{(k)}_- + \langle D \rangle n^{(k)}_-  = \langle D \rangle^{-1} \big|u^{(k-1)}\big|^2, &n^{(k)}_-(0,x) = \phi_-(x),
\end{cases}
$$
for $k\in \mathbb{Z}^+.$
By \eqref{DF}, we first write
\begin{equation}\label{localre1}
\begin{cases}
u^{(0)}(t,x) = e^{-it\Delta}u_0(x),\\
n_+^{(0)}(t,x) = e^{it\langle D\rangle}\phi_+(x),\\
n^{(0)}_-(t,x)=e^{-it\langle D\rangle}\phi_-(x),
\end{cases}
\end{equation}
and
\begin{equation}\label{localre2}
\begin{cases}
\displaystyle u^{(k)}(t,x)=e^{-it\Delta}u_0(x) - i\int_0^t e^{i(t-s)\Delta}u^{(k-1)}(s,\cdot)\frac{n^{(k-1)}_+ + n^{(k-1)}_-}{2}(s,\cdot) ds,\\
\displaystyle n^{(k)}_+(t,x) = e^{it\langle D\rangle}\phi_+(x) - i\int_0^t e^{-i(t-s)\langle D\rangle}( \langle D \rangle^{-1} |u^{(k-1)}(s,\cdot)|^2) ds,\\
\displaystyle n^{(k)}_-(t,x) = e^{-it\langle D\rangle}\phi_-(x) + i\int_0^t e^{i(t-s)\langle D\rangle}( \langle D \rangle^{-1} |u^{(k-1)}(s,\cdot)|^2) ds.
\end{cases}
\end{equation}

Now we show these sequences are Cauchy in Gevrey-Bourgain spaces. Applying Lemma \ref{sol_estimate} to \eqref{localre1} implies
\begin{equation}
\big\|u^{(0)} \big\|_{X^{\sigma,0,b}(\delta)}\leq C \|u_0\|_{G^{\sigma, 0}}\label{u0_estimate}\\
\end{equation}
and
\begin{equation}
\big\|n_\pm^{(0)} \big\|_{X^{\sigma,1,b}_\pm(\delta)}\leq C \|\phi_\pm\|_{G^{\sigma, 1}},\label{phi_estimate}
\end{equation}
while applying Lemmas \ref{sol_estimate} and \ref{lem2} to \eqref{localre2} yields
\begin{align}\label{uk_estimate_1}
 \nonumber\big\| u^{(k)}\big\|_{X^{\sigma,0,b}(\delta)}&\leq C \|u_0\|_{G^{\sigma,0}}
 + C\bigg\| u^{(k-1)}\frac{n^{(k-1)}_++n^{(k-1)}_-}{2}\bigg\|_{X^{\sigma,0,b-1}(\delta)}\\
&\leq C \|u_0\|_{G^{\sigma,0}} + C\delta^{b'-b}\bigg\| u^{(k-1)}\frac{n^{(k-1)}_++n^{(k-1)}_-}{2}\bigg\|_{X^{\sigma,0,b'-1}(\delta)}
\end{align}
and
\begin{equation}\label{nk_estimate_1}
\big\| n_\pm^{(k)}\big\|_{X_\pm^{\sigma,1,b}(\delta)}\leq C \|\phi_\pm\|_{G^{\sigma,1}} + C\delta^{b'-b}\big\||u^{(k-1)}|^2\big\|_{X^{\sigma,0,b'-1}_\pm(\delta)}
\end{equation}
with $1/2<b<b'<1$.
We then apply Lemma \ref{estimate} with $s=0$ to the last norms in \eqref{uk_estimate_1} and \eqref{nk_estimate_1}, respectively, to obtain
\begin{align}\label{uk_estimate}
\big\|u^{(k)}\big\|_{X^{\sigma,0,b}(\delta)}&\leq C \|u_0\|_{G^{\sigma,0}}\\
\nonumber&+C\delta^{b'-b}\big\|u^{(k-1)}\big\|_{X^{\sigma,0,b}(\delta)}
\big(\big\|n^{(k-1)}_+\big\|_{X^{\sigma,1,b}_+(\delta)}+\big\|n^{(k-1)}_-\big\|_{X^{\sigma,1,b}_-(\delta)}\big)
\end{align}
and
\begin{equation}\label{nk_estimate}
\big\|n^{(k)}_\pm \big\|_{X^{\sigma,1,b}_\pm(\delta)}
\leq C\| \phi_\pm\|_{G^{\sigma,1}} +C\delta^{b'-b}\big\| u^{(k-1)} \big\|^2_{X^{\sigma, 0,b}(\delta)}
\end{equation}
for $1/2<b<b'<\min\{(6-d)/4,1\}$. By induction together with \eqref{u0_estimate}, \eqref{phi_estimate}, \eqref{uk_estimate} and \eqref{nk_estimate},
it follows that for all $k\geq 0$
\begin{equation}\label{uk}
\big\| u^{(k)}\big\|_{X^{\sigma,0,b}(\delta)} \leq C \| u_0\|_{G^{\sigma,0}}
\end{equation}
and
\begin{equation}\label{nk}
\big\| n^{(k)}_\pm \big\|_{X^{\sigma,1,b}_\pm (\delta)} \leq C \|\phi_\pm\|_{G^{\sigma,1}} + C\| u_0\|_{G^{\sigma,0}}
\end{equation}
with a choice of $\delta$ as
\begin{equation}\label{delc}
\delta^{b'-b} = \frac 1{8C}\big(1+\| u_0\|_{G^{\sigma,0}} +\| \phi_+\|_{G^{\sigma,1}} +\| \phi_-\|_{G^{\sigma,1}}\big)^{-1}
\end{equation}
where $0<b'-b<\min\{(4-d)/4, 1/2\}$. Here we take $b'-b=(1/q)^-$ with $q$ as in \eqref{delta}. Similarly, we apply Lemmas \ref{sol_estimate} and \ref{lem2} with the same $\delta$ to yield
\begin{align*}
&\big\| u^{(k)}-u^{(k-1)}\big\|_{X^{\sigma,0,b}(\delta)}\\
&\leq C \delta^{(1/q)^-} \bigg\| u^{(k-1)}\frac{n^{(k-1)}_+-n^{(k-1)}_-}{2}-u^{(k-2)}\frac{n^{(k-2)}_+-n^{(k-2)}_-}{2}\bigg\|_{X^{\sigma,0,b'-1}(\delta)}\\
&=C\delta^{(1/q)^-} \bigg\| \big(u^{(k-1)}-u^{(k-2)}\big)\frac{n^{(k-1)}_+-n^{(k-1)}_-}{2}
+ \sum_{\pm} u^{(k-2)}\frac{n^{(k-1)}_\pm-n^{(k-2)}_\pm}{\pm2}\bigg\|_{X^{\sigma,0,b'-1}(\delta)}
\end{align*}
which can be in turn bounded by
\begin{align*}
C\delta^{(1/q)^-} &\sum_{\pm}\big(\|\phi_\pm\|_{G^{\sigma,1}}+\| u_0\|_{G^{\sigma,0}}\big) \big\|u^{(k-1)}-u^{(k-2)}\big\|_{X^{\sigma,0,b}(\delta)}\\
&+C\delta^{(1/q)^-} \sum_{\pm}\|u_0 \|_{G^{\sigma,0}}\big\|n^{(k-1)}_\pm-n^{(k-2)}_\pm\big\|_{X^{\sigma,1,b}_\pm(\delta)}
\end{align*}
using Lemma \ref{estimate} together with \eqref{uk} and \eqref{nk}.
Consequently, we get
\begin{equation*}
\big\| u^{(k)}-u^{(k-1)}\big\|_{X^{\sigma,0,b}(\delta)} \leq \frac{1}{4}\Big(\big\|u^{(k-1)}-u^{(k-2)}\big\|_{X^{\sigma,0,b}(\delta)} +\sum_\pm\big\|n^{(k-1)}_\pm-n^{(k-2)}_\pm\big\|_{X^{\sigma,1,b}_\pm(\delta)}\Big)
\end{equation*}
by \eqref{delc}.
Similarly, we have
\begin{equation*}
\big\| n^{(k)}_\pm - n^{(k-1)}_\pm \big\|_{X^{\sigma,1,b}_\pm(\delta)} \leq \frac{1}{8} \big\|u^{(k-1)}-u^{(k-2)}\big\|_{X^{\sigma,0,b}(\delta)}.
\end{equation*}
Combining these two estimates, we finally get
\begin{align*}
\big\| u^{(k)}-u^{(k-1)}\big\|&_{X^{\sigma,0,b}(\delta)}+\sum_\pm\big\| n^{(k)}_\pm - n^{(k-1)}_\pm \big\|_{X^{\sigma,1,b}_\pm(\delta)}\\
&\leq \frac{1}{2}\Big(\big\|u^{(k-1)}-u^{(k-2)}\big\|_{X^{\sigma,0,b}(\delta)} +\sum_\pm\big\| n^{(k-1)}_\pm - n^{(k-2)}_\pm\big\|_{X^{\sigma,1,b}_\pm(\delta)}\Big)
\end{align*}
which guarantees the convergence of the sequence $\big\{\big(u^{(k)},n_+^{(k)},n_-^{(k)}\big)\big\}^{\infty}_{k=0}$ to a solution $\{(u,n_+,n_-)\}$ with the bounds \eqref{uk} and \eqref{nk}. This also implies \eqref{lowe1} and \eqref{lowe2}.

\end{proof}

\subsection{Almost conservation law}\label{sec5.2}
Now we would like to apply repeatedly the local result just obtained above to cover time intervals of arbitrary length. This, of course, requires some sort of control on the growth of the Gevrey norm of the solution. This is because the local existence time depends on the norm at initial time. 

Let $\mathfrak{M}_{\sigma}(t)=\| u(t)\|^2_{G^{\sigma,0}}$ and $\mathfrak{N}_\sigma(t)=\|n_+(t)\|_{G^{\sigma,1}} + \| n_-(t)\|_{G^{\sigma,1}}$.
Then one can easily see
\begin{equation}\label{conservation}
\mathfrak{M}_{0}(t)\equiv\mathfrak{M}_{0}(0)
\end{equation}
which follows from the charge conservation of the system,
$$
\|u(t)\|_{L^2(\mathbb{R}^d)}\equiv\|u(0)\|_{L^2(\mathbb{R}^d)}.
$$
Although the conservation \eqref{conservation} fails to hold for $\sigma>0$,
the quantity $\mathfrak{M}_{\sigma}(t)$ is approximately conservative in the sense that the discrepancy between $\mathfrak{M}_{\sigma}(t)$ and $\mathfrak{M}_{\sigma}(0)$ is bounded appropriately. Similarly for $\mathfrak{N}_{\sigma}(t)$.
Indeed, the almost conservation in the following proposition will allow us in Subsection \ref{sec5.3} to repeat the local result on successive short time intervals to reach any target time $T>0$, by adjusting the strip width parameter $\sigma$ according to the size of $T$.
\begin{prop}\label{almost_conservation_law}
Let $d=1,2,3$, $\sigma>0$, $b=(1/2)^+$ and $\delta$ be as in Theorem \ref{local_wellposedness2}. 
For any solution $(u,n_+,n_-)\in X^{\sigma,0,b}(\delta)\times X^{\sigma,1,b}_+ (\delta)\times X^{\sigma,1,b}_-(\delta)$ to the Cauchy problem \eqref{KGS2} on the time interval $[0,\delta]$, we then have
\begin{equation}\label{U_estimate}
\sup_{t\in[0,\delta]} \mathfrak{M}_\sigma(t) \leq \mathfrak{M}_\sigma(0) + C\sigma\delta^{(1/q)^-}\mathfrak{M}_{\sigma}(0)\big(\mathfrak{M}^{1/2}_{\sigma}(0)+\mathfrak{N}_{\sigma}(0)\big)
\end{equation}
and
\begin{equation}\label{N_estimate}
\sup_{t\in[0,\delta]}\mathfrak{N}_{\sigma}(t) \leq\mathfrak{N}_{\sigma}(0) + C\delta^{(1/q)^-}\mathfrak{M}_\sigma(0)
\end{equation}
with the same $q$ as in Theorem \ref{local_wellposedness2} and a constant $C>0$.
\end{prop}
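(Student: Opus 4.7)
The plan is to prove the two bounds separately, using different strategies: a weighted energy identity that exploits the cancellation encoded in the operator $F$ of \eqref{F} for \eqref{U_estimate}, and a direct Duhamel argument requiring no such cancellation for \eqref{N_estimate}. I take throughout the exponents $b=(1/2)^+$ and $b'=b+(1/q)^-$ inherited from Theorem \ref{local_wellposedness2}; the constraint $b'-b<(4-d)/4$ needed for Lemmas \ref{estimate} and \ref{F_lemma} is satisfied by the choice $q=\max\{4/(4-d),2\}$.

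For \eqref{U_estimate}, set $U:=e^{\sigma\|D\|}u$, so that $\mathfrak{M}_\sigma(t)=\|U(t)\|_{L^2}^2$, and apply $e^{\sigma\|D\|}$ to the $u$-equation. A standard $L^2$-energy computation, in which the Laplacian contribution vanishes because $\langle i\Delta U,U\rangle\in i\mathbb R$, gives
\[
\frac{d}{dt}\|U\|_{L^2}^2 = -2\,\mathrm{Im}\int e^{\sigma\|D\|}(un)\,\bar U\,dx,\qquad n:=\tfrac12(n_++n_-).
\]
The identity $e^{\sigma\|D\|}(un) = Un - F(U,n)$, immediate from \eqref{F}, inserted here produces two terms: the $Un\bar U$ piece integrates to $\int n|U|^2\,dx\in\mathbb R$ and has zero imaginary part, leaving after time integration
\[
\mathfrak{M}_\sigma(t)-\mathfrak{M}_\sigma(0)=2\,\mathrm{Im}\int_0^t\!\!\int F(U,n)\bar U\,dx\,ds.
\]
Splitting $n=(n_++n_-)/2$ and dualizing each piece in the Bourgain space with the Schr\"odinger phase (with Lemma \ref{lem3} to absorb $\chi_{[0,t]}$), I bound $|\int_0^t\!\int F(U,n_\pm)\bar U\,dx\,ds|$ by $\|F(U,n_\pm)\|_{X^{0,b'-1}(\delta)}\cdot\|U\|_{X^{0,1-b'}(\delta)}$. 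Lemma \ref{F_lemma} handles the first factor and is where the decisive $\sigma$ is produced, yielding $\sigma\|u\|_{X^{\sigma,0,b}(\delta)}\|n_\pm\|_{X^{\sigma,1,b}_\pm(\delta)}$; for the second factor I invoke the trivial inclusion $X^{0,b}\subset X^{0,(1/2)^-}$ followed by Lemma \ref{lem2} with indices $1-b'$ and $(1/2)^-$, extracting the additional time factor $\delta^{(1/q)^-}$. Substituting \eqref{lowe1} and \eqref{lowe2} from Theorem \ref{local_wellposedness2} converts the $X$-norms of $u$ and $n_\pm$ into $\mathfrak{M}_\sigma(0)^{1/2}$ and $\mathfrak{N}_\sigma(0)+\mathfrak{M}_\sigma(0)^{1/2}$ respectively, which gives \eqref{U_estimate}.

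For \eqref{N_estimate}, set $W_\pm:=e^{\sigma\|D\|}\langle D\rangle n_\pm$, so that $\mathfrak{N}_\sigma=\|W_+\|_{L^2}+\|W_-\|_{L^2}$, and observe that $W_\pm$ satisfies $(i\partial_t\mp\langle D\rangle)W_\pm=\mp e^{\sigma\|D\|}|u|^2$. The Duhamel formula, together with the $L^2$-unitarity of $e^{\mp it\langle D\rangle}$ and the triangle inequality, gives
\[
\|W_\pm(t)\|_{L^2}-\|W_\pm(0)\|_{L^2} \leq \sup_{\tau\in[0,\delta]}\Big\|\int_0^\tau e^{\mp i(\tau-s)\langle D\rangle}e^{\sigma\|D\|}|u(s)|^2\,ds\Big\|_{L^2}.
\]
Lemma \ref{lem1} bounds the supremum by the $X^{0,b}_\pm(\delta)$-norm of the Duhamel integral; Lemma \ref{sol_estimate} bounds that by $\|e^{\sigma\|D\|}|u|^2\|_{X^{0,b-1}_\pm(\delta)}$; Lemma \ref{lem2} converts $b-1$ to $b'-1$ at the cost of $\delta^{(1/q)^-}$; the bilinear estimate \eqref{estimate2} with $s=0$ bounds the result by $\|u\|_{X^{\sigma,0,b}(\delta)}^2$; and \eqref{lowe1} gives $\|u\|_{X^{\sigma,0,b}(\delta)}^2\lesssim\mathfrak M_\sigma(0)$. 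Summing over the two signs yields \eqref{N_estimate}.

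The delicate point is really in the first step: one must organise the Bourgain-space duality so that Lemma \ref{F_lemma} applies directly to $\|F(U,n_\pm)\|_{X^{0,b'-1}(\delta)}$. Although the lemma is formulated for $\overline{F(v,m)}$, its proof rests on a pointwise Fourier bound $|\mathcal F[F(v,m)]|\lesssim\sigma\,\mathcal F[|v|\,e^{\sigma\|D\|}\partial_x|m|]$ that is insensitive to complex conjugation; combined with the evenness in $\xi$ of the Schr\"odinger symbol, this transfers the bound to $F(U,n_\pm)$ itself and allows the pairing with $\bar U$ to close in its natural Schr\"odinger-phase norm. The remaining arithmetic of exponents is then compatible by the construction of $q$.
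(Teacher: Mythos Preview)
Your proposal is correct and follows essentially the same route as the paper: the weighted $L^2$ energy identity for $U=e^{\sigma\|D\|}u$ with the cancellation of the real term $n|U|^2$, Bourgain-space duality together with Lemma~\ref{lem3}, Lemma~\ref{F_lemma} to produce the factor $\sigma$, and then \eqref{lowe1}--\eqref{lowe2}; for $\mathfrak N_\sigma$ the Duhamel argument with Lemmas~\ref{lem1}, \ref{sol_estimate}, \ref{lem2} and \eqref{estimate2} is exactly the paper's. The only cosmetic differences are that you dualize at the pair $(b'-1,\,1-b')$ and extract $\delta^{(1/q)^-}$ from the $U$ factor via Lemma~\ref{lem2}, whereas the paper dualizes at $(b-1,\,1-b)$ and gains the $\delta$ from the $\overline F$ factor; and you apply the pointwise Fourier bound underlying Lemma~\ref{F_lemma} to $F$ itself rather than to $\overline F$, which is legitimate for the reason you give.
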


\begin{proof}
Let $0\leq \delta' \leq \delta$. Setting $v(t,x) = e^{\sigma\|D\|}u(t,x)$ and applying $e^{\sigma \|D\|}$ to the first equation in \eqref{KGS2}, we obtain
$$
(i\partial_t + \Delta) v = -v\bigg(\frac{n_++n_-}{2}\bigg) + F\bigg(v,\frac{n_++n_-}{2}\bigg)
$$
where $F$ is as in \eqref{F}. For brevity, here we shall simply write $F$ for $\displaystyle F\bigg(v,\frac{n_++n_-}{2}\bigg)$. Multiplying both sides by $\bar{v}$ and taking imaginary parts thereon, we see
$$
\text{Re}(\bar{v}\partial_tv) + \text{Im}(\bar{v}\Delta v) = \text{Im}( \bar{v}F),
$$
or equivalently
$$
\partial_t |v|^2 + 2\text{Im}(\bar{v}\Delta v) =2 \text{Im}( \bar{v}F)
$$
where we used the fact $\partial_t | v |^2=2\textrm{Re}(\bar{v}\partial_tv)$.
Integrating in space and using integration by parts, we also see
$$
\frac{d}{dt}\int_{\mathbb{R}^d} |v|^2 dx = 2\text{Im} \int_{\mathbb{R}^d} \bar{v}F dx,
$$
where we may assume that $v$ and its all spatial derivatives decay to zero as $|x| \rightarrow \infty$.\footnote{This property can be shown by approximation using the monotone convergence theorem
and the Riemann-Lebesgue lemma whenever $u\in X_h^{\sigma,1,b}(\delta)$. See the argument in \cite{SS}, p. 1018.}
Subsequently integrating in time over $[0,\delta']$, we now have
$$
\int_{\mathbb{R}^d} |v(\delta')|^2 dx = \int_{\mathbb{R}^d} |v(0)|^2 dx + 2\text{Im} \int_{\mathbb{R}^{1+d}} \chi_{[0,\delta']}(t)\bar{v}F dtdx.
$$
Using H\"{o}lder's inequality, and then Lemmas \ref{lem3} and \ref{lem2} as before, the rightmost integral is bounded as,
\begin{align*}
\bigg|\int_{\mathbb{R}^{1+d}} \chi_{[0,\delta']}(t)\bar{v}F dtdx\bigg|&\leq \| \chi_{[0,\delta']}v \|_{X^{0,1-b}}\|\chi_{[0,\delta']}\overline{F}\|_{X^{0,b-1}} \\
&\lesssim \delta^{(1/q)^-}\| u \|_{X^{\sigma, 0,1-b}(\delta')}\|\overline{F}\|_{X^{0,b'-1}(\delta')},
\end{align*}
which is in turn bounded by
$$
\delta^{(1/q)^-}\sigma\|u\|_{X^{\sigma,0,1-b}(\delta')}\|u\|_{X^{\sigma,0,b}(\delta')}(\|n_+\|_{X^{\sigma,1,b}_+(\delta')}+\|n_-\|_{X^{\sigma,1,b}_-(\delta')})
$$
from Lemma \ref{F_lemma}.
By applying \eqref{lowe1} and \eqref{lowe2} we therefore get
$$
\|u(\delta')\|^2_{G^{\sigma,0}} \leq \| u_0\|^2_{G^{\sigma,0}}+C\delta^{(1/q)^-}\sigma\| u\|^2_{G^{\sigma,0}}(\| u\|_{G^{\sigma,0}} + \|\phi_+\|_{G^{\sigma,1}}+\|\phi_-\|_{G^{\sigma,1}})
$$
which implies \eqref{U_estimate}.

Next we show \eqref{N_estimate}.
By \eqref{DF} we first note
$$n_\pm(t,x)=e^{\pm it\langle D \rangle} \phi_\pm(x) \mp i\int^t_0 e^{\mp i(t-s)\langle D\rangle}(\langle D\rangle^{-1}|u(s,\cdot)|^2)ds.$$
By Lemma \ref{lem1} we then see
\begin{equation*}
\| n_\pm(t)\|_{G^{\sigma,1}} \leq \|  \phi_\pm(x) \|_{G^{\sigma,1}} +C\|\langle D\rangle^{-1}|u|^2\|_{X^{\sigma,1,b-1}_\pm(\delta)},
\end{equation*}
while
\begin{align*}
\|\langle D\rangle^{-1}|u|^2\|_{X^{\sigma,1,b-1}_\pm(\delta)}
&\lesssim\delta^{(1/q)^-}\||u|^2\|_{X^{\sigma,0,b'-1}_\pm(\delta)}\\
&\lesssim\delta^{(1/q)^-}\|u\|^2_{X^{\sigma,0,b}(\delta)}\\
&\lesssim\delta^{(1/q)^-}\|u_0\|^2_{G^{\sigma,0}}
\end{align*}
by Lemma \ref{lem2}, \eqref{estimate2} and \eqref{lowe2}.
Summing up, we get for any $t\in[0,\delta]$
$$
\sum_{\pm}\| n_\pm(t)\|_{G^{\sigma,1}} \leq \sum_{\pm}\| \phi_\pm\|_{G^{\sigma,1}}+C\delta^{(1/q)^-}\|u_0\|^2_{G^{\sigma,0}}
$$
which implies \eqref{N_estimate}.

\end{proof}

\subsection{Global extension and radius of analyticity}\label{sec5.3}
Lastly we put it all together to complete the proof of Theorem \ref{maintheorem2}. By the embedding \eqref{emb}, the general case $(r,s)\in\mathbb{R}^2$ reduces to $(r,s)=(0,1)$ as shown at the end of this subsection.

Let us now prove the case $(r,s)=(0,1)$.
Given $\sigma_0>0$ and data such that $\mathfrak{M}_{\sigma_0}(0)$ and $\mathfrak{N}_{\sigma_0}(0)$ are finite, we must prove that for all large $T$, the solution has a positive radius of analyticity
\begin{equation}\label{m8m}
\sigma(t)\geq ct^{-p^+}\quad \textrm{for all}\ t\in[0,T],
\end{equation}
where $c>0$ is a constant depending on the data norms $\mathfrak{M}_{\sigma_0}(0)$ and $\mathfrak{N}_{\sigma_0}(0)$.
Now fix $T>1$ arbitrarily large and let $A\gg1$ denote a constant which may depend on $\mathfrak{M}_{\sigma_0}(0)$ and $\mathfrak{N}_{\sigma_0}(0)$; the choice of $A$ will be made explicit below. Let $q$ be as in Theorem \ref{local_wellposedness2}. It suffices to show that for all $t\in[0,T]$
\begin{equation}\label{label0}
\mathfrak{M}_{\sigma(t)}(t)\leq 4\mathfrak{M}_{\sigma(t)}(0)
\end{equation}
and
\begin{equation}\label{label00}
\mathfrak{N}_{\sigma(t)}(t)\leq 2AT^{q^+}
\end{equation}
with $\sigma(t)\leq\sigma_0$ satisfying \eqref{m8m}, which in turn implies $(u,n_+,n_-)(t)\in G^{\sigma(t),0}\times G^{\sigma(t),1}\times G^{\sigma(t),1}$ as desired. For brevity we denote $\sigma=\sigma(t)$.
To prove \eqref{label0} and \eqref{label00}, we may first assume that
\begin{equation}\label{aeonslater}
\mathfrak{M}^{1/2}_{\sigma_0}(0)+\mathfrak{N}_{\sigma_0}(0)\leq T^{q+\varepsilon_0}
\end{equation}
for some small $\varepsilon_0 >0$ since we are considering large $T$. Then we let
\begin{equation}\label{plancktime}
\delta=\frac{c_0}{AT^{q+\varepsilon_0}},
\end{equation}
for some small $c_0>0$ such that $n:=T/\delta$ is an integer.
It suffices to show for any $k\in\{1,2,\cdots,n\}$ and for some small $\varepsilon>0$ that
\begin{align}
\sup_{t\in[0,k\delta]}\mathfrak{M}_{\sigma}(t)&\leq\mathfrak{M}_{\sigma}(0)+kC\sigma\delta^{1/q-\varepsilon}(4\mathfrak{M}_{\sigma}(0))(4AT^{q+\varepsilon_0}),\label{label1}\\
\sup_{t\in[0,k\delta]}\mathfrak{N}_{\sigma}(t)&\leq\mathfrak{N}_{\sigma}(0)+kC\delta^{1/q-\varepsilon}(4\mathfrak{M}_\sigma(0)),\label{label11}
\end{align}
provided
\begin{align}
&nC\sigma\delta^{1/q-\varepsilon}(4\mathfrak{M}_{\sigma}(0))(4AT^{q+\varepsilon_0})\leq\mathfrak{M}_{\sigma}(0),\label{goodeq}\\
&nC\delta^{1/q-\varepsilon}(4\mathfrak{M}_\sigma(0))\leq AT^{q+\varepsilon_0}.\label{goodeq2}
\end{align}
For this, we shall use induction. The case $k=1$ is immediate from \eqref{U_estimate}, \eqref{N_estimate} and \eqref{aeonslater}. Now assume \eqref{label1} and \eqref{label11} hold for some $k\in\{1,2,\cdots,n-1\}$. By this assumption, \eqref{label0} and \eqref{label00} hold for $t\in[0,(n-1)\delta]$. Hence applying \eqref{U_estimate} with $k\delta$ as the initial time we have
\begin{align*}
\sup_{t\in[k\delta,(k+1)\delta]} \mathfrak{M}_\sigma(t)&\leq\mathfrak{M}_\sigma(k\delta)+C\sigma\delta^{1/q-\varepsilon}\mathfrak{M}_{\sigma}(k\delta)\big(\mathfrak{M}^{1/2}_{\sigma}(k\delta)+\mathfrak{N}_{\sigma}(k\delta)\big)\\
&\leq \mathfrak{M}_\sigma(k\delta)+C\sigma\delta^{1/q-\varepsilon}(4\mathfrak{M}_{\sigma}(0))\big(2\mathfrak{M}^{1/2}_{\sigma}(0)+2AT^{q+\varepsilon_0}\big),
\end{align*}
which is in turn bounded by
\begin{equation*}
\mathfrak{M}_\sigma(0)+(k+1)C\sigma\delta^{1/q-\varepsilon}\mathfrak{M}_{\sigma}(0)(4AT^{q+\varepsilon_0}),
\end{equation*}
using \eqref{aeonslater} and \eqref{label1}. In the same manner, we get
$$
\sup_{t\in[k\delta,(k+1)\delta]}\mathfrak{N}_{\sigma}(t)\leq\mathfrak{N}_{\sigma}(0)+(k+1)C\delta^{1/q-\varepsilon}(4\mathfrak{M}_\sigma(0)).
$$

We will show that \eqref{goodeq} and \eqref{goodeq2} hold under \eqref{m8m}. From $T=n\delta$ and \eqref{plancktime}, we get $n\delta^{1/q-\varepsilon}=T\delta^{(1-q)/q-\varepsilon}=c_1A^{(q-1)/q+\varepsilon}T^{1-(q+\varepsilon_0)((1-q)/q-\varepsilon)}$ where $c_1$ is an absolute constant. We can choose $\varepsilon>0$ such that $-(q+\varepsilon_0)((1-q)/q-\varepsilon)=q-1+\varepsilon_0$, which gives $n\delta^{1/q-\varepsilon}=c_1A^{(q-1)/q+\varepsilon}T^{q+\varepsilon_0}$; simple calculation shows that $\varepsilon\rightarrow0$ as $\varepsilon_0\rightarrow0$. Therefore \eqref{goodeq} and \eqref{goodeq2} reduce to
\begin{align}
&C\sigma c_1A^{(q-1)/q+\varepsilon}T^{q+\varepsilon_0}(16AT^{q+\varepsilon_0})\leq1,\label{finally}\\
&Cc_1A^{(q-1)/q+\varepsilon}T^{q+\varepsilon_0}(4\mathfrak{M}_\sigma(0))\leq AT^{q+\varepsilon_0}.\label{finally2}
\end{align}
To satisfy \eqref{finally2} we choose $A$ so large that
$$
Cc_1(4\mathfrak{M}_\sigma(0))\leq A^{1/q-\varepsilon}.
$$
Finally, \eqref{finally} is satisfied if $\sigma = cT^{-2q-2\varepsilon_0}$ where $c$ is a constant that may depend on $\mathfrak{M}_{\sigma_0}(0)$ and $\mathfrak{N}_{\sigma_0}(0)$. Since $2q=p$ and $\varepsilon_0$ can be arbitrarily small, we get the radius of analyticity \eqref{m8m}.

Now we consider the general case $(r,s)\in\mathbb{R}^2$. Recall that
$$
G^{\sigma,s}\subset G^{\sigma^\prime,s^\prime}\ \textrm{ for all }\ \sigma>\sigma'\geq 0\ \textrm{ and }\ s,s'\in\mathbb{R},
$$
from which we see that for any $(r,s)\in\mathbb{R}^2$,
\begin{align*}
(u_0,\phi_+,\phi_-) &\in G^{\sigma_0,r}(\mathbb{R}^d) \times G^{\sigma_0,s}(\mathbb{R}^d) \times G^{\sigma_0,s}(\mathbb{R}^d)\\
&\subset G^{\sigma_0/2,0}(\mathbb{R}^d) \times G^{\sigma_0/2,1}(\mathbb{R}^d) \times G^{\sigma_0/2,1}(\mathbb{R}^d).
\end{align*}
From the local theory there is a $\delta$ such that
$$
(u(t),n_+(t),n_-(t))\in G^{\sigma_0/2,0}(\mathbb{R}^d) \times G^{\sigma_0/2,1}(\mathbb{R}^d) \times G^{\sigma_0/2,1}(\mathbb{R}^d)\quad\textrm{for }\ 0\leq t\leq\delta\textrm{.}
$$
As in the case $(r,s)=(0,1)$, for fixed $T$ greater than $\delta$, we have $(u(t),n_+(t),n_-(t))\in G^{\sigma',0}(\mathbb{R}^d) \times G^{\sigma',1}(\mathbb{R}^d) \times G^{\sigma',1}(\mathbb{R}^d)$ for $t\in[0,T]$
and $\sigma^\prime\geq cT^{-p^+}$ with the constant $c>0$ depending on the data norms $\mathfrak{M}_{\sigma_0/2}(0)$ and $\mathfrak{N}_{\sigma_0/2}(0)$.
Applying the embedding again, we conclude
$$
(u(t),n_+(t),n_-(t))\in G^{\sigma,r}(\mathbb{R}^d) \times G^{\sigma,s}(\mathbb{R}^d) \times G^{\sigma,s}(\mathbb{R}^d)\quad\text{for}\,\ t\in [0,T]
$$
where $\sigma =\sigma^\prime/2$. 
This completes the proof.
\qed

\section{Proof of Lemma \ref{estimate}}\label{sec6}
This final section is devoted to the proof of Lemma \ref{estimate}.
Note first that
\begin{equation*}
\| fg\|_{X_h^{\sigma,s,b}} \leq \| (e^{\sigma\|D\|}f)(e^{\sigma\|D\|}g)\|_{X_h^{s,b}}
\end{equation*}
by the definitions of the norms. From this observation, \eqref{estimate1} and \eqref{estimate2} reduce to showing the case $\sigma=0$:
$$
\|fg\|_{X^{0,b'-1}}\lesssim\|f\|_{X^{0,b}}\|g\|_{X^{s,b}_\pm}
$$
and
$$
\|f\bar{g}\|_{X_\pm^{-s,b'-1}}\lesssim\|f\|_{X^{0,b}}\|g\|_{X^{0,b}}.
$$
The proof for $d=1$ can be found in \cite{HGHL}. Now let $d=2,3$. By the definition of $X^{s,b}$-norms and the dual characterisation of $L^2$ space, we may show that
\begin{gather}\label{part1}
\iiiint\limits_{\xi_0+\xi_1+\xi_2=0\atop \tau_0+\tau_1+\tau_2=0} \frac{\langle \xi_2 \rangle^{-s}f_0(\xi_0,\tau_0)f_1(\xi_1,\tau_1)f_2(\xi_2,\tau_2)}{\langle \tau_0-|\xi_0|^2\rangle^{1-b'}\langle \tau_1+|\xi_1|^2\rangle^{b}\langle \tau_2\pm|\xi_2|\rangle^{b}} d\xi_1d\xi_2d\tau_1d\tau_2 \lesssim\prod_{j=0}^2 \|f_j\|_{L^2_{\xi,\tau}}
\end{gather}
and
\begin{gather}\label{part2}
\iiiint\limits_{\xi_0+\xi_1+\xi_2=0\atop \tau_0+\tau_1+\tau_2=0}\frac{\langle \xi_2 \rangle^{-s}f_0(\xi_0,\tau_0)f_1(\xi_1,\tau_1)f_2(\xi_2,\tau_2)}{\langle \tau_0-|\xi_0|^2\rangle^{b}\langle \tau_1+|\xi_1|^2\rangle^{b}\langle \tau_2\pm|\xi_2|\rangle^{1-b'}} d\xi_1d\xi_2d\tau_1d\tau_2 \lesssim\prod_{j=0}^2 \|f_j\|_{L^2_{\xi,\tau}}.
\end{gather}
For this, the following lemma will be used repeatedly.
\begin{lem}[\cite{ET}]\label{ilemma}
If $\alpha>1$ and $\alpha\geq\beta\geq 0$, then
$$
\int_{\mathbb{R}}\frac{dy}{\langle y-a\rangle^{\alpha}\langle y-b\rangle^{\beta}}\lesssim \langle a-b\rangle^{-\beta}.
$$
\end{lem}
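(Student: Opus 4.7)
The plan is to prove the elementary integral bound by a standard dyadic/region decomposition based on the triangle inequality, which forces at least one of $|y-a|$ and $|y-b|$ to exceed $|a-b|/2$. First I would dispose of the easy case $|a-b|\leq 1$, in which $\langle a-b\rangle\sim 1$ and the integrand is dominated by $\langle y-a\rangle^{-\alpha}$, whose integral converges since $\alpha>1$.

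Assume henceforth $|a-b|\geq 1$, so $\langle a-b\rangle\sim|a-b|$. Split $\mathbb{R}$ into the two regions
\[
A=\{y:|y-b|\geq |a-b|/2\},\qquad B=\{y:|y-b|<|a-b|/2\}.
\]
On $A$ we have $\langle y-b\rangle\gtrsim\langle a-b\rangle$, so using $\beta\geq 0$,
\[
\int_{A}\frac{dy}{\langle y-a\rangle^{\alpha}\langle y-b\rangle^{\beta}}\;\lesssim\;\langle a-b\rangle^{-\beta}\int_{\mathbb{R}}\frac{dy}{\langle y-a\rangle^{\alpha}}\;\lesssim\;\langle a-b\rangle^{-\beta},
\]
where convergence of the last integral uses $\alpha>1$. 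On $B$ the triangle inequality gives $|y-a|\geq|a-b|-|y-b|\geq|a-b|/2$, hence $\langle y-a\rangle\gtrsim\langle a-b\rangle$, and therefore
\[
\int_{B}\frac{dy}{\langle y-a\rangle^{\alpha}\langle y-b\rangle^{\beta}}\;\lesssim\;\langle a-b\rangle^{-\alpha}\int_{|y-b|\leq|a-b|/2}\frac{dy}{\langle y-b\rangle^{\beta}}.
\]

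The remaining step is to evaluate the inner integral and compare with $\langle a-b\rangle^{-\beta}$, splitting on the size of $\beta$. If $0\leq\beta<1$, the integral is $\lesssim\langle a-b\rangle^{1-\beta}$, giving the bound $\langle a-b\rangle^{1-\alpha-\beta}\lesssim\langle a-b\rangle^{-\beta}$ since $\alpha>1$. If $\beta=1$, the integral is $\lesssim\log\langle a-b\rangle$, so we obtain $\langle a-b\rangle^{-\alpha}\log\langle a-b\rangle\lesssim\langle a-b\rangle^{-1}$ because $\alpha>1=\beta$. If $\beta>1$, the integral is $O(1)$, and we use the hypothesis $\alpha\geq\beta$ to conclude $\langle a-b\rangle^{-\alpha}\leq\langle a-b\rangle^{-\beta}$. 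The main (and only) subtlety is the borderline case $\beta=1$, where one picks up a logarithm that must be absorbed by the extra decay $\alpha-\beta>0$; all other cases are immediate consequences of the two hypotheses $\alpha>1$ and $\alpha\geq\beta$.
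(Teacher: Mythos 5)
Your proof is correct. Note that the paper does not actually prove this lemma---it is imported from \cite{ET} as a black box---so there is no argument in the paper to compare with; your splitting into $\{|y-b|\geq|a-b|/2\}$ and its complement, with the three subcases $\beta<1$, $\beta=1$, $\beta>1$ for the inner integral, is exactly the standard proof, and each hypothesis is used where it must be ($\alpha>1$ for convergence on the first region and to absorb the factor $\langle a-b\rangle^{1-\alpha}$ or the logarithm, and $\alpha\geq\beta$ in the case $\beta>1$).
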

To show \eqref{part1} and \eqref{part2}, we first break the integration region into two parts: $|\xi_1|,|\xi_2|\lesssim 1$ (and thus $|\xi_0|\lesssim 1$) and the rest. The bound $|\xi_1|,|\xi_2|\lesssim 1$ will make the matter relatively simple. However, the second part requires a more delicate approach due to the absence of boundedness. Now we begin the first part.

\subsection{The case $|\xi_1|,|\xi_2|\lesssim 1$}\label{6.1}
In this case, it suffices to show
\begin{gather}\label{part1_case0}
\iiiint\limits_{\xi_0+\xi_1+\xi_2=0\atop \tau_0+\tau_1+\tau_2=0}\frac{f_0(\xi_0,\tau_0)f_1(\xi_1,\tau_1)f_2(\xi_2,\tau_2)}{\langle \tau_0-|\xi_0|^2\rangle^{1-b'}\langle \tau_1+|\xi_1|^2\rangle^{b}\langle \tau_2\pm|\xi_2|\rangle^{b}} d\xi_0d\xi_1d\tau_0d\tau_1 \lesssim\prod_{j=0}^2 \|f_j\|_{L^2_{\xi,\tau}}
\end{gather}
and
\begin{gather*}
\iiiint\limits_{\xi_0+\xi_1+\xi_2=0\atop \tau_0+\tau_1+\tau_2=0}\frac{f_0(\xi_0,\tau_0)f_1(\xi_1,\tau_1)f_2(\xi_2,\tau_2)}{\langle \tau_0-|\xi_0|^2\rangle^{b}\langle \tau_1+|\xi_1|^2\rangle^{b}\langle \tau_2\pm|\xi_2|\rangle^{1-b'}} d\xi_0d\xi_1d\tau_0d\tau_1 \lesssim\prod_{j=0}^2 \|f_j\|_{L^2_{\xi,\tau}}.
\end{gather*}
Since the latter can be obtained in a similar manner, we shall only show \eqref{part1_case0}. Using the H\"older inequality in $d\xi_0d\tau_0$ and then in $d\xi_1d\tau_1$, the left hand side of \eqref{part1_case0} is bounded as
\begin{align*}
&\|f_0\|_{L^2_{\xi_0,\tau_0}}\bigg\|  \iint\frac{f_1(\xi_1,\tau_1)f_2(-\xi_0-\xi_1,-\tau_0-\tau_1)}{\langle \tau_0-|\xi_0|^2\rangle^{1-b'}\langle \tau_1+|\xi_1|^2\rangle^{b}\langle \tau_0+\tau_1\pm|\xi_0+\xi_1|\rangle^{b}} d\xi_1d\tau_1\bigg\|_{L^2_{\xi_0,\tau_0}}\\
&\leq\|f_0\|_{L^2_{\xi_0,\tau_0}}\bigg\|  \iint f^2_1(\xi_1,\tau_1)f^2_2(-\xi_0-\xi_1,-\tau_0-\tau_1)d\xi_1d\tau_1\\
&\qquad\qquad\qquad\qquad\qquad\times\iint\frac{\langle \tau_0-|\xi_0|^2\rangle^{2(b'-1)}}{\langle \tau_1+|\xi_1|^2\rangle^{2b}\langle \tau_0+\tau_1\mp|\xi_0+\xi_1|\rangle^{2b}}d\xi_1d\tau_1 \bigg\|^{\frac{1}{2}}_{L^1_{\xi_0,\tau_0}}.
\end{align*}
Now the estimate \eqref{part1_case0} is obtained since
\begin{align*}
    \iint\frac{\langle \tau_0-|\xi_0|^2\rangle^{2b'-2}d\xi_1d\tau_1}{\langle \tau_1+|\xi_1|^2\rangle^{2b}\langle \tau_0+\tau_1\mp|\xi_0+\xi_1|\rangle^{2b}}&=\iint\frac{\langle \tau_0-|\xi_0|^2\rangle^{2b'-2}\langle \tau_1+|\xi_1|^2\rangle^{-2b}}{\langle \tau_0+\tau_1\mp|\xi_0+\xi_1|\rangle^{2b}}d\xi_1d\tau_1\\
    &\lesssim \iint\frac{\langle \tau_0+\tau_1-|\xi_0|^2+|\xi_1|^2\rangle^{2b'-2}}{\langle \tau_0+\tau_1\mp|\xi_0+\xi_1|\rangle^{2b}}d\xi_1d\tau_1\\
    &\lesssim \int\langle |\xi_0|^2-|\xi_1|^2\mp|\xi_0+\xi_1|\rangle^{2b'-2}d\xi_1<\infty
\end{align*}
uniformly in $\xi_0$ and $\tau_0$. This follows easily from using Lemma \ref{ilemma} together with $\langle a+b\rangle \lesssim \langle a\rangle\langle b\rangle$, $-2b<2b'-2<0$ and $|\xi_1|\lesssim 1$.
\qed

\subsection{The case $|\xi_1|\gg1$ or $|\xi_2|\gg1$}\label{6.2}
 Let $M_i$ be dyadic\footnote{This ranges over integer powers of 2.} numbers and $f_i^{M_i}=\chi_{\{|\xi_i|\sim M_i\}}f$
 so that $f_i=\sum_{M_i} f_i^{M_i}$.
 For simplicity, we drop the superscripts $M_i$ on $f_i^{M_i}$.
Then the left hand side of \eqref{part1} and \eqref{part2} are dyadically decomposed as
\begin{equation}\label{part1_decompose}
\sum_{M_0,M_1,M_2}\langle M_2 \rangle^{-s}\iiiint\limits_{\xi_0+\xi_1+\xi_2=0\atop \tau_0+\tau_1+\tau_2=0} \frac{f_0(\xi_0,\tau_0)f_1(\xi_1,\tau_1)f_2(\xi_2,\tau_2)}{\langle \tau_0-|\xi_0|^2\rangle^{1-b'}\langle \tau_1+|\xi_1|^2\rangle^{b}\langle \tau_2\pm|\xi_2|\rangle^{b}} d\xi_1d\xi_2d\tau_1d\tau_2
\end{equation}
and
\begin{equation}\label{part2_decompose}
\sum_{M_0,M_1,M_2}\langle M_2 \rangle^{-s}\iiiint\limits_{\xi_0+\xi_1+\xi_2=0\atop \tau_0+\tau_1+\tau_2=0} \frac{f_0(\xi_0,\tau_0)f_1(\xi_1,\tau_1)f_2(\xi_2,\tau_2)}{\langle \tau_0-|\xi_0|^2\rangle^{b}\langle \tau_1+|\xi_1|^2\rangle^{b}\langle \tau_2\pm|\xi_2|\rangle^{1-b'}}d\xi_1d\xi_2d\tau_1d\tau_2.
\end{equation}
Since $\tau_0+\tau_1+\tau_2=0$ and $\xi_0+\xi_1+\xi_2=0$, we also note
\begin{align*}
\max\{|\tau_0-|\xi_0|^2|,|\tau_1+|\xi_1|^2|,|\tau_2\pm|\xi_2||\}&\gtrsim \big|\tau_0-|\xi_0|^2+\tau_1+|\xi_1|^2+\tau_2\pm|\xi_2|\big|\\
&=\big||\xi_1+\xi_2|^2-|\xi_1|^2\mp|\xi_2|\big|\\
&=2|\xi_1||\xi_2||B|,
\end{align*}
where
\begin{equation}\label{BBB}
B =\cos(\alpha_{12})+\frac{|\xi_2|\mp1}{2|\xi_1|}
\end{equation}
with $\alpha_{12}$ being an angle between $\xi_1$ and $\xi_2$.
Now we set
\begin{equation}\label{M}
M=\max\{|\tau_0-|\xi_0|^2|,|\tau_1+|\xi_1|^2|,|\tau_2\pm|\xi_2||\}
\end{equation}
and then $M\gtrsim M_1M_2|B|$.
Let $h(\xi)=\pm|\xi|^2$ or $\pm|\xi|$. Then we may bound
\begin{equation*}
\sum_{M_0,M_1,M_2}\langle M_2 \rangle^{-s}\iiiint\limits_{\xi_0+\xi_1+\xi_2=0\atop \tau_0+\tau_1+\tau_2=0} \frac{f_0(\xi_0,\tau_0)f_1(\xi_1,\tau_1)f_2(\xi_2,\tau_2)}{\langle M\rangle^{1-b'}\langle \tau_{\sigma(0)}+h(\xi_{\sigma(0)})\rangle^{b}\langle \tau_{\sigma(1)}+h(\xi_{\sigma(1)})\rangle^{b}} d\xi_1d\xi_2d\tau_1d\tau_2
\end{equation*}
at one time for \eqref{part1_decompose} and \eqref{part2_decompose},
where $\sigma$ is a permuation in $\{0,1,2\}$. Here we used the fact that $\langle M\rangle^{1-b'} \langle\tau+h(\xi)\rangle^{b}\leq \langle M\rangle^{b} \langle\tau+h(\xi)\rangle^{1-b'}$ since $1/2<b\leq b'$.
To bound the above, we will first estimate the dyadic pieces in the sum and then check dyadic summability thereof.
Depending on which of the three becomes $M$ in \eqref{M}, we have three cases to examine.

We shall first consider the case when $M = |\tau_0-|\xi_0|^2|$.
We have to bound
\begin{equation}\label{serf}
\sum_{M_0,M_1,M_2}\langle M_2 \rangle^{-s}\iiiint\limits_{\xi_0+\xi_1+\xi_2=0\atop \tau_0+\tau_1+\tau_2=0} \frac{f_0(\xi_0,\tau_0)f_1(\xi_1,\tau_1)f_2(\xi_2,\tau_2)}{\langle M\rangle^{1-b'}\langle \tau_1+|\xi_1|^2\rangle^{b}\langle \tau_2\pm|\xi_2|\rangle^{b}} d\xi_1d\xi_2d\tau_1d\tau_2.
\end{equation}
Recall $M\gtrsim M_1M_2|B|$.
If $|B|\gtrsim 1$ then $M$ is bounded below by $M_1M_2$.
In this case we bound
\begin{equation}\label{case1_part1}
\sum_{M_0,M_1,M_2}\frac{\langle M_2 \rangle^{-s}}{ \langle M_1M_2\rangle^{1-b'}}\iiiint\limits_{\xi_0+\xi_1+\xi_2=0\atop \tau_0+\tau_1+\tau_2=0} \frac{f_0(\xi_0,\tau_0)f_1(\xi_1,\tau_1)f_2(\xi_2,\tau_2)}{\langle \tau_1+|\xi_1|^2\rangle^{b}\langle \tau_2\pm|\xi_2|\rangle^{b}} d\xi_1d\xi_2d\tau_1d\tau_2.
\end{equation}
However, for $|B|\ll 1$, the absence of such a bound again requires us further dyadic decomposition for $|B|$.
This finally leads to the following cases:
\begin{align*}
&a.\quad |B|\gtrsim 1,\quad  M_0 \lesssim M_1 \sim M_2,\\
&b.\quad |B|\gtrsim 1,\quad  M_2 \ll M_0 \sim M_1,\\
&c.\quad |B|\gtrsim 1,\quad  M_1 \ll M_0\sim M_2,\\
&d.\quad |B|\ll 1.
\end{align*}

\subsubsection*{Case a. $|B|\gtrsim 1, M_0 \lesssim M_1 \sim M_2$}
We decompose the functions $f_1$ and $f_2$ as
$$
f_1=\sum_{n\in\mathbb{Z}}f^n_1\quad \text{where}\quad f^n_1=\chi_{\{n-\frac{1}{2}\leq\tau_1+|\xi_1|^2< n+\frac{1}{2}\}}f_1
$$
and
$$
f_2=\sum_{n\in\mathbb{Z}}f^m_2\quad \text{where}\quad f^m_2=\chi_{\{m-\frac{1}{2}\leq\tau_2\pm|\xi_2|< m+\frac{1}{2}\}}f_2.
$$
Using these decompositions, the integral in \eqref{case1_part1} is bounded by
\begin{align}\label{case1_part11}
\nonumber \sum_{n\in\mathbb{Z}}\sum_{m\in \mathbb{Z}} &\langle n \rangle^{-b}\langle m \rangle^{-b} \iiiint\limits_{-\frac{1}{2}\leq\theta_i<\frac{1}{2}} f_0(-\xi_1-\xi_2, |\xi_1|^2\pm|\xi_2|-n-m-\theta_1-\theta_2)\\\
&\times f^n_1(\xi_1,-|\xi_1|^2+n+\theta_1)f^m_2(\xi_2,\mp|\xi_2|+m+\theta_2)d\xi_1d\xi_2d\theta_1d\theta_2
\end{align}
by the change of variables $\tau_1+|\xi_1|^2=n+\theta_1$ and $\tau_2 \pm |\xi_2|=m+\theta_2$.
To bound \eqref{case1_part11}, we first decompose
$$
f^n_{1,Q_i}=\chi_{\{\xi_1 \in Q_i\}}f^n_1\quad \text{and}\quad f^m_{2,R_j}=\chi_{\{\xi_2 \in R_j\}}f^m_2,
$$
where $Q_i$ and $R_j$ are essentially disjoint $d$-dimensional cubes of side length $M_0(\lesssim M_1,M_2)$ so that $\{|\xi_1|\sim M_1 \} =\cup_i Q_i$ and $\{|\xi_2|\sim M_2\} = \cup_j R_j$.
Then we consider the inner $d\xi_1d\xi_2$ integral in \eqref{case1_part11}; for fixed $\theta_i$, $n$ and $m$, first change variables $u=-\xi_1-\xi_2$ and $v=|\xi_1|^2\pm|\xi_2|-n-m-\theta_1-\theta_2$, replacing $\xi_1$ and one component of $\xi_2$, respectively. Let $\xi_i=(\xi_{i,1},\cdots,\xi_{i,d})$. Computing the determinant of the Jacobian matrix, we next see that
\begin{equation}\label{jaco}
dudvd\xi_{2,1}\cdots d\xi_{2,j-1}d\xi_{2,j+1}\cdots d\xi_{2,d}=\bigg|2\xi_{1,j}\pm\frac{\xi_{2,j}}{|\xi_2|}\bigg|d\xi_1d\xi_2.
\end{equation}
Since we may assume $M_1\gg 1$ in this case\footnote{We have already handled the case where all $M_i$ are small in the previous subsection.},
we have $|\xi_{1,j}|\sim M_1 \gg 1$ for some $j$.
Hence, for fixed $j$, the determinant of the Jacobian is nonzero in the region where $|\xi_{1,j}|\sim M_1 \gg 1$.
For this reason we divide the integration region in \eqref{case1_part11} into $d$ parts,
$|\xi_{1,j}|\sim M_1$ for $j=1,...,d$.
Without loss of generality, we may assume $j=1$. The inner integral is then rephrased as
$$
\sum_{Q_i}\iint\limits_{(\xi_{2,2},\cdots,\xi_{2,d})\atop\in\pi(R_{j(i)})} f_0(u,v)H_{Q_i}(u,v,\xi_{2,2},\cdots,\xi_{2,d})\bigg|2\xi_{1,1}\pm\frac{\xi_{2,1}}{|\xi_2|}\bigg|^{-1}dudvd\xi_{2,2}\cdots d\xi_{2,d},
$$
where $\pi : \mathbb{R}^d \rightarrow \mathbb{R}^{d-1}$ is the projection onto the last $d-1$ components and
$$
H_{Q_i}(u,v,\xi_{2,2},\cdots,\xi_{2,d}) =  f^n_{1,Q_i}(\xi_1,-|\xi_1|^2+n+\theta_1)f^m_{2,R_{j(i)}}(\xi_2,\mp|\xi_2|+m+\theta_2).
$$
Here we used the fact that the cube $R_j=R_{j(i)}$ is essentially determined by the cube $Q_i$
since $\xi_0+\xi_1+\xi_2=0$ and $M_0 \lesssim M_1\sim M_2$.
More precisely, each region $Q_i$ could correspond to up to at most $3^d$ of the $R_j$ regions.

By using H\"{o}lder's inequality twice, the above is bounded by
\begin{align*}
&M^{-1}_1 \|f_0\|_{L^2_{u,v}}\sum_{Q_i}\bigg\|\int_{(\xi_{2,2},\cdots,\xi_{2,d})\atop\in\pi(R_{j(i)})} H_{Q_i}(u,v,\xi_{2,2},\cdots,\xi_{2,d})\,d\xi_{2,2}\cdots \,d\xi_{2,d}\bigg\|_{L^2_{u,v}}\\
&\lesssim M^{-1}_1 \|f_0\|_{L^2_{u,v}}M^{(d-1)/2}_0\sum_{Q_i}\| H_{Q_i}(u,v,\xi_{2,2},\cdots,\xi_{2,d})\|_{L^2_{u,v,\xi_{2,2}\cdots \xi_{2,d}}}.
\end{align*}
Changing variables back to $(\xi_1,\xi_2)$ in the $L^2_{u,v,\xi_{2,2},\cdots,\xi_{2,d}}$ norm, we gain a factor of $M_1^{1/2}$ and observe that
\begin{align*}
&\sum_{Q_i}\| H_{Q_i}(u,v,\xi_{2,2},\cdots,\xi_{2,d})\|_{L^2_{u,v,\xi_{2,2}\cdots \xi_{2,d}}}\\
&=M^{1/2}_1 \sum_{Q_i}\| f^n_{1,Q_i}(\xi_1,-|\xi_1|^2+n+\theta_1)\|_{L^2_{\xi_1}}\|f^m_{2,R_{j(i)}}(\xi_2,\mp|\xi_2|+m+\theta_2)\|_{L^2_{\xi_2}}\\
&\lesssim M^{1/2}_1 \| f^n_1(\xi_1,-|\xi_1|^2+n+\theta_1)\|_{L^2_{\xi_1}}\|f^m_{2}(\xi_2,\mp|\xi_2|+m+\theta_2)\|_{L^2_{\xi_2}}.
\end{align*}
Here we used the Cauchy-Schwarz inequality in $Q_i$ and the disjointness of $Q_i$ for the last inequality.
Thus \eqref{case1_part11}, as a whole, is bounded by
\begin{align*}
&M^{(d-1)/2}_0M^{-1/2}_1 \|f_0\|_{L^2_{\xi,\tau}}\sum_{n\in\mathbb{Z}}\sum_{m\in \mathbb{Z}} \langle n \rangle^{-b}\langle m \rangle^{-b}&\\
&\qquad \times\iint\limits_{-\frac{1}{2}\leq \theta_i<\frac{1}{2}}\| f^n_1(\xi_1,-|\xi_1|^2+n+\theta_1)\|_{L^2_{\xi_1}}\|f^m_{2}(\xi_2,\mp|\xi_2|+m+\theta_2)\|_{L^2_{\xi_2}}d\theta_1d\theta_2.
\end{align*}
Using the Cauchy-Schwarz inequality in $\theta_1$ and $\theta_2$, and then in $n$ and $m$ with the fact that $b>1/2$, the double sum on $n,m$ here is bounded by
\begin{align*}
&\sum_{n\in\mathbb{Z}} \langle n \rangle^{-b}\| f^n_1(\xi_1,-|\xi_1|^2+n+\theta_1)\|_{L^2_{\xi_1,\theta_1}(-\frac{1}{2}\leq\theta_1<\frac{1}{2})}\\
&\quad\times\sum_{m\in \mathbb{Z}} \langle m \rangle^{-b}\|f^m_{2}(\xi_2,\mp|\xi_2|+m+\theta_2)\|_{L^2_{\xi_2,\theta_2}(-\frac{1}{2}\leq\theta_2<\frac{1}{2})}\\
&\leq \bigg(\sum_{n\in\mathbb{Z}} \langle n \rangle^{-2b}\bigg)^{\frac{1}{2}} \bigg(\sum_{n\in\mathbb{Z}}\| f^n_1(\xi_1,-|\xi_1|^2+n+\theta_1)\|^2_{L^2_{\xi_1,\theta_1}(-\frac{1}{2}\leq\theta_1<\frac{1}{2})}\bigg)^{\frac{1}{2}}\\
&\quad\times \bigg(\sum_{m\in\mathbb{Z}} \langle m \rangle^{-2b}\bigg)^{\frac{1}{2}} \bigg(\sum_{n\in\mathbb{Z}}\| f^m_2(\xi_2,\mp|\xi_2|+m+\theta_2)\|^2_{L^2_{\xi_2,\theta_2}(-\frac{1}{2}\leq\theta_2<\frac{1}{2})}\bigg)^{\frac{1}{2}}\\
&\lesssim \|f_1\|_{L^{2}_{\xi,\tau}}\|f_2\|_{L_{\xi,\tau}^2}.
\end{align*}
Therefore, \eqref{case1_part1} for \textit{Case a} is bounded by
$$
\sum_{M_0\lesssim M_1\sim M_2} \langle M_2\rangle^{-s}\langle M_1M_2\rangle^{b'-1}M_0^{\frac{d-1}{2}}M_1^{-\frac{1}{2}}\prod_{i=0}^2\|f_i\|_{L^2_{\xi,\tau}}.
$$
Since $M_1\gg1$ and $b'<1$, the sum here is bounded as
\begin{align*}
\sum_{M_0}M_0^{\frac{d-1}{2}}\sum_{M_1\gtrsim M_0} \langle M_1\rangle^{-s}\langle M_1^2\rangle^{b'-1}M_1^{-\frac{1}{2}}
&\lesssim
\sum_{M_0\gg1}M_0^{\frac{d-1}{2}}\sum_{M_1\gtrsim M_0} M_1^{-s+2(b'-1)-\frac{1}{2}}\\
&+\sum_{M_0\lesssim1}M_0^{\frac{d-1}{2}}\sum_{M_1\gg1} M_1^{-s-\frac{1}{2}},
\end{align*}
which is in turn bounded by
$$
\sum_{M_0\gg1}  M_0^{2b'-\frac{(2s+6)-d}{2}}+\sum_{M_0\lesssim1} M_0^{\frac{d-1}{2}}
$$
since $s>-1/2$.
This is finally summable under the assumption $b'<(2s+6-d)/4$.
\qed

\subsubsection*{Case b. $|B|\gtrsim 1,  M_2 \ll M_0 \sim M_1$}
Repeating the change of variables and the ensuing procedure as in \textit{Case a}, one can bound
\begin{equation*}
\iiiint\limits_{\xi_0+\xi_1+\xi_2=0\atop \tau_0+\tau_1+\tau_2=0} \frac{f_0(\xi_0,\tau_0)f_1(\xi_1,\tau_1)f_2(\xi_2,\tau_2)}{\langle \tau_1+|\xi_1|^2\rangle^{b}\langle \tau_2\pm|\xi_2|\rangle^{b}} d\xi_1d\xi_2d\tau_1d\tau_2
\lesssim
M_2^{\frac{d-1}{2}}M_1^{-\frac{1}{2}}\prod_{i=0}^2\|f_i\|_{L^2_{\xi,\tau}}
\end{equation*}
since we may assume $M_1\gg1$ as well in this case. The restriction $M_1\gg1$ is necessary to ensure that the Jacobian is nonzero.
Furthermore, no decomposition of the integration regions into cubes is required here.
The decomposition makes the projection on the integration region in $\xi_2$ onto any axis have measure at most $\min\{M_0,M_1,M_2\}$,
but it is automatically true when $M_2=\min\{M_0,M_1,M_2\}$.
Now, \eqref{case1_part1} for \textit{Case b} is bounded by
$$
\sum_{M_2\ll M_0\sim M_1} \langle M_2\rangle^{-s}\langle M_1M_2\rangle^{b'-1}M_2^{\frac{d-1}{2}}M_1^{-\frac{1}{2}}\prod_{i=0}^2\|f_i\|_{L^2_{\xi,\tau}}
$$
where the sum is finite as before;
\begin{align*}
\sum_{M_2}\langle M_2\rangle^{-s}M_2^{\frac{d-1}{2}}
\sum_{M_1\gg M_2} \langle M_1M_2\rangle^{b'-1}M_1^{-\frac{1}{2}}
&\lesssim\sum_{M_2\gg1} M_2^{-s+\frac{d-1}{2}+2(b'-1)}
\sum_{M_1\gg M_2}M_1^{-\frac{1}{2}}\\
&+\sum_{M_2\lesssim1}\langle M_2\rangle^{-s+2(b'-1)}M_2^{\frac{d-1}{2}}
\sum_{M_1\gg 1}M_1^{-\frac{1}{2}}\\
&\lesssim\sum_{M_2\gg1}M_2^{2b'-\frac{(2s+6)-d}{2}}+\sum_{M_2\lesssim1}M_2^{\frac{d-1}{2}}<\infty.
\end{align*}
\qed

\subsubsection*{Case c. $|B|\gtrsim 1, M_1 \ll M_0\sim M_2$}
For the same reason we can proceed as in \textit{Case a} when $M_1\gg1$. So we shall only consider the case $M_1\lesssim 1$.
 Since we cannot guarantee that the Jacobian is nonzero any more, we shall use a similar argument as in Section \ref{6.1}.
 Using H\"{o}lder's inequality, the integral in \eqref{case1_part1} is bounded as
\begin{align*}
&\iiiint\limits_{\xi_0+\xi_1+\xi_2=0\atop \tau_0+\tau_1+\tau_2=0} \frac{f_0(\xi_0,\tau_0)f_1(\xi_1,\tau_1)f_2(\xi_2,\tau_2)}{\langle \tau_1+|\xi_1|^2\rangle^b \langle\tau_2\pm|\xi_2|\rangle^{b}}d\xi_1d\xi_2d\tau_1d\tau_2\\
&\leq \|f_0\|_{L^{2}_{\xi_0,\tau_0}}\bigg\|\iint \frac{f_1(\xi_1,\tau_1)f_2(-\xi_0-\xi_1,-\tau_0-\tau_1)}{\langle\tau_1+|\xi_1|^2\rangle^b \langle\tau_0+\tau_1\mp|\xi_0+\xi_1|\rangle^b}d\xi_1d\tau_1\bigg\|_{L^{2}_{\xi_0,\tau_0}}\\
&\leq\|f_0\|_{L^{2}_{\xi_0,\tau_0}}\times\\
&\bigg\|  \iint f^2_1(\xi_1,\tau_1)f^2_2(-\xi_0-\xi_1,-\tau_0-\tau_1)d\xi_1d\tau_1\iint\limits_{|\xi_1|\sim M_1}\frac{\langle \tau_1-|\xi_1|^2\rangle^{-2b}d\xi_1d\tau_1}{\langle \tau_0+\tau_1\mp|\xi_0+\xi_1|\rangle^{2b}} \bigg\|^{\frac{1}{2}}_{L^{1}_{\xi_0,\tau_0}}.
\end{align*}
Here,
\begin{align*}
\sup_{\xi_0,\tau_0}\iint_{|\xi_1|\sim M_1}\frac{\langle \tau_1-|\xi_1|^2\rangle^{-2b}d\xi_1d\tau_1}{\langle \tau_0+\tau_1\mp|\xi_0+\xi_1|\rangle^{2b}}
&\lesssim \sup_{\xi_0,\tau_0}\iint_{|\xi_1|\sim M_1}\langle\tau_0+|\xi_1|^2\mp|\xi_0+\xi_1|\rangle^{-2b}d\xi_1\\
&\lesssim M_1^d
\end{align*}
using Lemma \ref{ilemma} with $\alpha=\beta=2b$. Hence \eqref{case1_part1} is bounded by
$$
\sum_{M_1\ll M_0\sim M_2,\,M_1\lesssim1}\langle M_2\rangle^{-s}\langle M_1M_2\rangle^{b'-1}M_1^{\frac{d}{2}}\prod_{i=0}^2\|f_i\|_{L^2_{\xi,\tau}}.
$$
Using the simple inequality $\langle ab \rangle\gtrsim |a|\langle b \rangle$ for $a\lesssim1$,
we bound the sum as
$$
\sum_{M_1\lesssim1}M_1^{\frac{d}{2}}\sum_{M_2\gg M_1}\langle M_2\rangle^{-s}\langle M_1M_2\rangle^{b'-1}
\lesssim\sum_{M_1\lesssim1}M_1^{\frac{d}{2}+b'-1}\sum_{M_2\gg 1}\langle M_2\rangle^{-s+b'-1}
$$
since we may assume $M_2\gg1$ (otherwise all $|\xi_j|\lesssim1$, and this case was already addressed in Subsection \ref{6.1}).
This is summable since $s>b'-1$ and $d\geq2>2-2b'$.
\qed

\begin{figure} [t]
 \begin{center}
  {\includegraphics[width=0.4\textwidth]{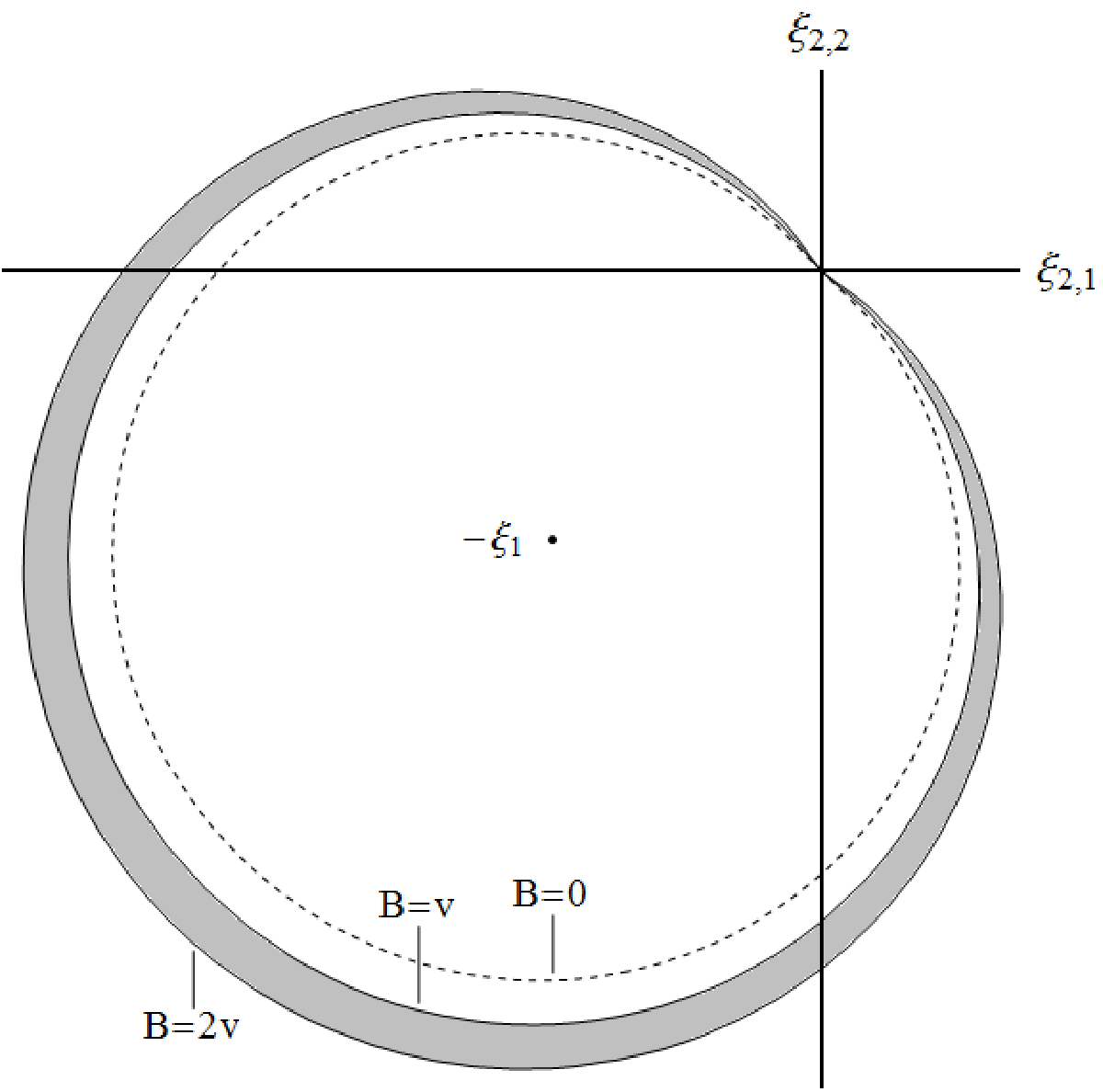}}
 \end{center}
 \caption{\label{figure}Integration region in $\xi_2$ for $d=2$}
\end{figure}

\subsubsection*{Case d. $|B|\ll 1$}

Recall first from \eqref{BBB} that
$$B=\cos{\alpha_{12}}+\frac{|\xi_2|\mp 1}{2|\xi_1|}.$$
Since  $|B|\ll 1$, we then see $|\xi_2|\lesssim |\xi_1|\pm1$, and hence $M_2\lesssim M_0\sim M_1$ in this case\footnote{We have either $M_2\ll M_1$ or $M_2\sim M_1$. The largest two of $M_0$, $M_1$, $M_2$ are comparable since $\xi_0+\xi_1+\xi_2=0$, and thus we have $M_1\not\ll M_0$ which, for the case $M_2\ll M_1$, clearly gives $M_0\sim M_1$. For the case $M_2\sim M_1$, if $M_0\ll M_1\sim M_2$ then $\xi_1$ is almost identical in size to $\xi_2$ but in the opposite direction. This implies $\cos(\alpha_{12})\approx-1$ and $\frac{|\xi_2|\mp 1}{2|\xi_1|}\approx1/2$ yet this contradicts that $|B|\ll 1$, which again gives $M_0\sim M_1$.}.
We may also assume that $|\xi_1|\gg1$. Otherwise all $|\xi_j|\lesssim1$, and this case was already addressed in Subsection \ref{6.1}.
Recalling $M\gtrsim M_1M_2|B|$ and decomposing dyadically $|B|\sim v\ll 1$, we need to bound
\begin{equation*}
\sum_{M_2\lesssim M_0\sim M_1}\frac{\langle M_2 \rangle^{-s}}{(M_1M_2)^{1-b'}}\sum_{v\ll 1} \frac{1}{v^{1-b'}}
\iiiint\limits_{\xi_0+\xi_1+\xi_2=0\atop \tau_0+\tau_1+\tau_2=0} \frac{f_0(\xi_0,\tau_0)f_1(\xi_1,\tau_1)f_2(\xi_2,\tau_2)}{\langle \tau_1+|\xi_1|^2\rangle^{b}\langle \tau_2\pm|\xi_2|\rangle^{b}} d\xi_1d\xi_2d\tau_1d\tau_2
\end{equation*}
this time instead of \eqref{case1_part1}.
The integral here boils down to
\begin{align}\label{case1_part14}
\nonumber\sum_{n\in\mathbb{Z}}&\sum_{m\in \mathbb{Z}} \langle n \rangle^{-b}\langle m \rangle^{-b} \iiiint\limits_{-\frac{1}{2}\leq\theta_i<\frac{1}{2}\atop |B|\sim v} f_0(-\xi_1-\xi_2, |\xi_1|^2\pm|\xi_2|-n-m-\theta_1-\theta_2)\\\
&\times f^n_1(\xi_1,-|\xi_1|^2+n+\theta_1)f^m_2(\xi_2,\mp|\xi_2|+m+\theta_2)d\xi_1d\xi_2d\theta_1d\theta_2
\end{align}
similarly as in \textit{Case a} (see \eqref{case1_part11}).
For a fixed $\xi_1$,
the curve $B=0$ satisfies $|\xi_2|^2+2|\xi_1||\xi_2|\cos\alpha_{12}\mp|\xi_2|=0$, which is a slightly distorted version of
a circle of radius $|\xi_1|$ centered at $-\xi_1$ given by equation $|\xi_2|^2+2|\xi_1||\xi_2|\cos\alpha_{12}=0$.
The region of integration in $\xi_2$ is given by a shell centered on the curve $B=0$, with thickness $\lesssim vM_1$ (see Figure \ref{figure}).
This holds since for a fixed $\xi_1$ and a fixed angle $\alpha_{12}$,
$$
|\xi_2| \in \big[2|\xi_1|(v-\cos{\alpha_{12}})\pm 1 , 2|\xi_1|(2v-\cos{\alpha_{12}})\pm1)\big]
$$
which is an interval of length $2v|\xi_1|$. This follows from $|B|\in [v,2v]$.

Now we decompose the annulus $\{\xi_1 : |\xi_1|\sim M_1 \}$ into two parts,
a set $S$ where $|\xi_{1,j}|\sim M_1$ for each $j$, and its complement.
In two dimensions, this can be described explicitly by taking
$$
S=\bigg\{\xi_1 : |\xi_1|\sim M_1, \arg{(\xi_1)}\in \bigg[\frac{\pi}{8},\frac{3\pi}{8}\bigg) \cup \bigg[\frac{5\pi}{8},\frac{7\pi}{8}\bigg) \cup \bigg[\frac{9\pi}{8},\frac{11\pi}{8}\bigg) \cup\bigg[\frac{13\pi}{8},\frac{15\pi}{8}\bigg) \bigg\}.
$$
Notice that the complement of $S$ is simply a $\pi/4$ radian rotation of $S$ about the origin.
In higher dimensions, the set $S$ is similarly given; if we describe the space in hyperspherical coordinates, we require all $d-1$ angular variables to be bounded away from multiples of $\pi/2$--specifically to fall within the intervals $[n\pi/8, (n+2)\pi/8)$ given above.
The complement of $S$ then consists of $2^{d-1}-1$ copies of $S$, each of which can be obtained from $S$ by a sequence of $\pi/4$ radian rotations.

Now we shall discuss the two-dimensional case in detail since there is no fundamental difference in higher dimensions.
We perform a rotation so that \eqref{case1_part14} can be written as a sum of two integrals over $S$ as
\begin{align*}
\nonumber\sum_{k=0}^1 &\sum_{n\in\mathbb{Z}}\sum_{m\in \mathbb{Z}} \langle n \rangle^{-b}\langle m \rangle^{-b}
\iiiint\limits_{-\frac{1}{2}\leq\theta_i<\frac{1}{2}\atop |B|\sim v, \xi_1\in S} f_0(R_{\frac{k\pi}{4}}(-\xi_1-\xi_2), |\xi_1|^2\pm|\xi_2|-n-m-\theta_1-\theta_2)\\
&\times f^n_1(R_{\frac{k\pi}{4}}(\xi_1),-|\xi_1|^2+n+\theta_1)f^m_2(R_{\frac{k\pi}{4}}(\xi_2),\mp|\xi_2|+m+\theta_2)d\xi_1d\xi_2d\theta_1d\theta_2,
\end{align*}
where $R_y$ denotes a rotation by $y$ radians.
Here we break the $d\xi_1d\xi_2$ integration into two regions:
one where for fixed $\xi_1$ and $\xi_{2,1}$, the projection of the integration region onto the $\xi_{2,2}$ axis is length $\lesssim vM_1$, and one where for fixed $\xi_1$ and $\xi_{2,2}$, the projection onto the $\xi_{2,1}$ axis is length $\lesssim vM_1.$
We then use the change of variables $u=-\xi_1-\xi_2$ and $v=|\xi_1|^2\pm|\xi_2|-n-m-\theta_1-\theta_2$ once again from \textit{Case a}.
In the first region, changing variables to replace $d\xi_1d\xi_2$ with $dudvd\xi_{2,2}$, we have
$$
d\xi_1d\xi_2=\bigg|2\xi_{1,1}\pm\frac{\xi_{2,1}}{|\xi_2|}\bigg|^{-1}dudvd\xi_{2,2}
\sim|\xi_{1,1}|^{-1}dudvd\xi_{2,2}
$$
where $|\xi_{1,1}|\sim M_1 \gg 1$ which guarantees the nonzero determinant of the Jacobian.
Similarly in the other region, we replace $d\xi_1d\xi_2$ with $dudvd\xi_{2,1}$.

Following exactly the same lines as in \textit{Case a} for each $k=0,1$, one can bound the above sum by
$$
\frac{(vM_1)^{\frac{1}{2}}}{M_1^{\frac{1}{2}}}\prod_{i=0}^{2}\|f_i\|_{L^{2}_{\xi,\tau}}.
$$
But here, we note that
$$
\sum_{v\ll 1}\frac{1}{v^{1-b'}}\frac{(vM_1)^{\frac{1}{2}}}{M_1^{\frac{1}{2}}}\lesssim 1
$$
using the fact that $b'>1/2$.
In general dimensions, this bound becomes $M_2^{\frac{d-2}{2}}$. What we wanted is therefore bounded by
$$
\sum_{M_2\lesssim M_0\sim M_1}\frac{\langle M_2 \rangle^{-s}}{(M_1M_2)^{1-b'}}
M_2^{\frac{d-2}{2}}\prod_{i=0}^2\|f_i\|_{L^2_{\xi,\tau}}.
$$
Since $b'<1$, the sum here when $M_2\gg1$ is bounded as
$$
\sum_{M_2\gg1}\langle M_2 \rangle^{-s}M_2^{b'-1}M_2^{\frac{d-2}{2}}\sum_{M_1\gtrsim M_2}M_1^{b'-1}
\lesssim \sum_{M_2\gg1}  M_2^{2b'-\frac{(2s+6)-d}{2}}
$$
which is summable under the assumption $b'<(2s+6-d)/4$.
But when  $M_2\lesssim1$,
we apply the same procedure as in \textit{Case b} after reducing \eqref{serf} to
\begin{equation*}
\sum_{M_2\lesssim 1\lesssim M_0\sim M_1}\langle M_2 \rangle^{-s}\iiiint\limits_{\xi_0+\xi_1+\xi_2=0\atop \tau_0+\tau_1+\tau_2=0} \frac{f_0(\xi_0,\tau_0)f_1(\xi_1,\tau_1)f_2(\xi_2,\tau_2)}{\langle \tau_1+|\xi_1|^2\rangle^{b}\langle \tau_2\pm|\xi_2|\rangle^{b}} d\xi_1d\xi_2d\tau_1d\tau_2.
\end{equation*}
Then we immediately arrive at
$$
\sum_{M_2\lesssim1\lesssim M_0\sim M_1} \langle M_2\rangle^{-s}M_2^{\frac{d-1}{2}}M_0^{-\frac{1}{2}}
\lesssim\sum_{M_2\lesssim1} M_2^{\frac{d-1}{2}}\sum_{M_0\gtrsim 1}M_0^{-\frac{1}{2}}<\infty.
$$
\qed

Now the case $M = |\tau_0-|\xi_0|^2|$ is complete.
The other cases where $M=|\tau_1+|\xi_1|^2|$ or $M=|\tau_2\pm|\xi_2||$ are handled in the same manner.
One can directly apply the proof above again to the former case, merely exchanging the roles of $(\xi_0,\tau_0)$ and $(\xi_1,\tau_1)$.
The procedure for the latter one is also similar; one needs to bound
\begin{equation*}
\iiiint\limits_{\xi_0+\xi_1+\xi_2=0\atop \tau_0+\tau_1+\tau_2=0} \frac{\langle M_2 \rangle^{-s}f_0(\xi_0,\tau_0)f_1(\xi_1,\tau_1)f_2(\xi_2,\tau_2)}{\langle M\rangle^{1-b'}\langle \tau_0-|\xi_0|^2\rangle^{b}\langle \tau_1+|\xi_1|^2\rangle^{b}} \,d\xi_0\,d\xi_1\,d\tau_0d\tau_1
\end{equation*}
and proceed just as before to arrive at using the change of variables
$u=-\xi_0-\xi_1$ and $v=-|\xi_0|^2+|\xi_1|^2-n-m-\theta_1-\theta_2$.
The determinant of the Jacobian matrix here may differ but it is harmless to the process.
For example,
\begin{equation*}
dudvd\xi_{1,1}\cdots d\xi_{1,j-1}d\xi_{1,j+1}\cdots d\xi_{1,d}=|2\xi_{0,j}+2\xi_{1,j}|d\xi_0d\xi_1=|2\xi_{2,j}|d\xi_0d\xi_1
\end{equation*}
replaces \eqref{jaco}.


\begin{thebibliography}{}

\bibitem{AKS}
J. Ahn, J. Kim and I. Seo,
\textit{On the radius of spatial analyticity for defocusing nonlinear Schr\"odinger equations},
Discrete Contin. Dyn. Syst. 40 (2020), 423-439.

\bibitem{AKS2}
J. Ahn, J. Kim and I. Seo,
\textit{Lower bounds on the radius of spatial analyticity for the Kawahara equation},
Anal. Math. Phys. 11 (2021), 28. 32D15.

\bibitem{B}
J. Bourgain,
\textit{On the Cauchy problem for the Kadomtsev-Petviashvili equation},
Geom. Funct. Anal. 3 (1993), 315-341.

\bibitem{BG}
J. L. Bona and Z. Gruji\'{c},
\textit{Spatial analyticity properties of nonlinear waves},
Math. Models Methods Appl. Sci. 13 (2003), 345-360.

\bibitem{BGK}
J. L. Bona, Z. Gruji\'{c} and H. Kalisch,
\textit{Algebraic lower bounds for the uniform radius of spatial analyticity for the generalized KdV equation},
Ann. Inst. H. Poincar\'{e} Anal. Non Lin\'{e}aire. 22 (2005), 783-797.

\bibitem{BGK2}
J. L. Bona, Z. Gruji\'{c} and H. Kalisch,
\textit{Global solutions of the derivative Schr\"odinger equations in a class of functions analytic in a strip},
J. Differential Equations. 229 (2006), 186-203.

\bibitem{CHT}
J. Colliander, J. Holmer and N. Tzirakis,
\textit{Low regularity global well-posedness for the Zakharov and Klein-Gordon-Schr\"odinger systems},
Trans. Amer. Math. Soc. 360 (2008), 4619-4638.


\bibitem{C}
E. Compann,
\textit{Smoothing for the Zakharov and Klein-Gordon-Schr\"odinger systems on Euclidean spaces},
SIAM J. Math. Anal. 49 (2017), 4206-4231.

\bibitem{ET}
M. B. Erdo\u{g}an and N. Tzirakis,
\textit{Smoothing and global attractors for the Zakharov system on the torus},
Anal. PDE. 6 (2013), 723-750.

\bibitem{FT}
C. Foias and R. Temam,
\textit{Gevrey class regularity for the solutions of the Navier-Stokes equations},
J. Funct. Anal. 87 (1989), 359-369.

\bibitem{HGHL}
C. Huang, B. Guo, D. Huang and Q. Li,
\textit{Global well-posedness of the fractional Klein-Gordon-Schr\"odinger system with rough initial data},
Sci. China Math. 59 (2016), 1345-1366.

\bibitem{HW}
J. Huang and M. Wang,
\textit{New lower bounds on the radius of spatial analyticity for the KdV equation},
J. Differential Equations. 266 (2019), 5278-5317.

\bibitem{K}
Y. Katznelson,
\textit{An Introduction to Harmonic Analysis, corrected ed.},
Dover Publications, Inc., New York (1976).

\bibitem{KM}
T. Kato and K. Masuda,
\textit{Nonlinear evolution equations and analyticity. I},
Ann. Inst. H. Poincar\'{e} Anal. Non Lin\'{e}aire. 3 (1986), 455-467.

\bibitem{P}
 S. Panizzi,
 \textit{On the domain of analyticity of solutions to semilinear Klein-Gordon equations},
Nonlinear Anal. 75 (2012), 2841-2850.

\bibitem{H}
H. Pecher,
\textit{Global solutions of the Klein-Gordon-Schr\"odinger system with rough data},
Differential Integral Equations. 17 (2004), 179-214.

\bibitem{H2}
H. Pecher,
\textit{Low regularity well-posedness for the 3D Klein-Gordon-Schr\"odinger system},
Commun. Pure Appl. Anal. 11 (2012), 1081-1096.

\bibitem{H3}
H. Pecher,
\textit{Some new well-posedness results for the Klein-Gordon-Schr\"odinger system},
Differential Integral Equations. 25 (2012), 117-142.


\bibitem{PS}
G. Petronilho and P. Leal da Silva,
\textit{On the radius of spatial analyticity for the modified Kawahara equation on the line},
Math. Nachr. 292 (2019), 2032-2047.


\bibitem{S}
S. Selberg,
\textit{On the radius of spatial analyticity for solutions of the Dirac-Klein-Gordon equations in two space dimensions},
Ann. Inst. H. Poincar\'{e} Anal. Non Lin\'{e}aire. 36 (2019), 1131-1330.

\bibitem{SS}
S. Selberg and D.O. da Silva,
\textit{Lower bounds on the radius of spatial analyticity for the KdV equation},
Ann. Henri Poincar\'{e}. 18 (2017), 1009-1023.

\bibitem{ST}
S. Selberg and A. Tesfahun,
\textit{On the radius of spatial analyticity for the 1d Dirac-Klein-Gordon equations},
J. Differential Equations. 259 (2015), 4732-4744.

\bibitem{T}
T. Tao,
\textit{Nonlinear Dispersive Equations: Local and Global Analysis},
CBMS Regional Conference Series in Mathematics, vol. 106, American Mathematical Society, Providence, RI, 2006,
published for the Conference Board of the Mathematical Sciences, Washington, DC.

\bibitem{Te}
A. Tesfahun,
\textit{On the radius of spatial analyticity for cubic nonlinear Schr\"odinger equations},
J. Differential Equations. 263 (2017), 7496-7512.

\bibitem{Te2}
A. Tesfahun,
\textit{Asymptotic lower bound for the radius of spatial analyticity to solutions of KdV equation},
Commun. Contemp. Math. 21 (2019), 1850061, 33.
\end{thebibliography}
\end{document}